\newenvironment{proof}{\noindent {\bf Proof:}}{\hfill $\Box$}
\newtheorem{theorem}{Theorem}
\newtheorem{lemma}{Lemma}
\newtheorem{definition}{Definition}
\newtheorem{assumption}{Assumption}
\newcommand{\cH}{\mathcal H}
\newcommand{\bM}{\mathbf M}
\newcommand{\RR}{\mathbb R}
\newcommand{\NN}{\mathbb N}
\newcommand{\C}{\mathcal C}
\newcommand{\uu}{\mathbf u}
\newcommand{\vv}{\mathbf v}
\definecolor{didiergreen}{rgb}{0.1, 0.7, 0.1}
\title{\bf Infinite-dimensional moment-SOS hierarchy for nonlinear partial differential equations}
\begin{document}

\author{
Didier Henrion$^{1,2}$,
Maria Infusino$^3$,
Salma Kuhlmann$^4$,
Victor Vinnikov$^5$
}
\footnotetext[1]{CNRS; LAAS; Universit\'e de Toulouse, 7 avenue du colonel Roche, F-31400 Toulouse, France. }
\footnotetext[2]{Faculty of Electrical Engineering, Czech Technical University in Prague,
Technick\'a 2, CZ-16626 Prague, Czechia.}
\footnotetext[3]{Department of Mathematics and Computer Science, University of Cagliari, Via Ospedale 72, Palazzo delle Scienze 09124 Cagliari, Italy.}
\footnotetext[4]{Department of Mathematics and Statistics, University of Konstanz, 78457 Konstanz, Germany.}
\footnotetext[5]{Department of Mathematics, Ben-Gurion University of the Negev, P.O.B. 653, Be'er Sheva 8410501 Israel.}

\date{\today}

\maketitle

\begin{abstract}
We formulate a class of nonlinear {evolution} partial differential equations (PDEs) as linear optimization problems on moments of positive measures supported on infinite-dimensional vector spaces. Using sums of squares (SOS) representations of polynomials in these spaces, we can prove convergence of a hierarchy of finite-dimensional semidefinite relaxations solving approximately these infinite-dimensional optimization problems. As an illustration, we report on numerical experiments for solving the heat equation subject to a nonlinear perturbation.
\end{abstract}

\section{Introduction}
The \emph{moment-sum-of-squares (SOS) hierarchy} is a mathematical technology that consists of formulating a nonconvex nonlinear mathematical problem as a linear convex optimization problem on the cone of positive measures, and then solving approximately the infinite-dimensional linear problem on measures by a hierarchy of finite-dimensional convex  (and typically semidefinite) optimization problems called moment relaxations. This hierarchy builds on the duality between the cone of positive moments and the cone of positive polynomials, and its convergence relies on SOS representations of positive polynomials. See \cite{hkl20} for a recent overview and applications.

The moment-SOS hierarchy, also known as the Lasserre hierarchy, \cite{l01,hl03} was originally used in the early 2000s to solve globally finite-dimensional polynomial optimization problems \cite{l09}, and then it was extended to optimal control of nonlinear ordinary differential equations \cite{lhpt08,hp17,hkw19}. It was later on extended to optimal control of linear partial differential equations (PDEs) \cite{mp20}. More recently, it was further extended to nonlinear PDEs, for scalar hyperbolic conservation laws \cite{mwhl20} as well as in a more general semialgebraic setting \cite{khl21}.

The approach followed in \cite{mwhl20,khl21} consists of introducing linear optimization problems with measure-valued solutions, with the aim that the measures are concentrated on the solution of the PDE. Concentration is achieved for scalar hyperbolic conservation laws using additional linear entropy inequalities \cite{mwhl20}. In the more general setting of \cite{khl21}, concentration cannot be ensured, and there may be a relaxation gap between the nonlinear PDE and the linear problem on measures, unless the problem is convex \cite{hkkr23}.

Prior to these recent attempts, in \cite{mknt08} the authors discretized nonlinear PDEs in the time and space domain, producing a large-scale but sparse non-convex polynomial optimization problem that can be solved with the sparse moment-SOS hierarchy \cite{w08,mw23}. In \cite{mp20} the authors focused on the optimal control of linear {evolution} PDEs with Riesz spectral operators, i.e. with a discrete real spectrum. In this setting, the infinite-dimensional evolution equation was discretized and approximated with a finite-dimensional ordinary differential equation, and the standard finite-dimensional moment-SOS hierarchy is applied as in \cite{lhpt08,hp17,hkw19}.

In this paper we follow a radically different route to solving nonlinear PDEs with the moment-SOS hierarchy. We use measures supported on infinite-dimensional vector spaces to formulate the nonlinear differential equation as a moment problem. The vector space is chosen such that, on the one hand, we have a representation theorem (a Positivstellensatz) to deal with polynomial positivity, and on the other hand, we can solve numerically the infinite-dimensional problem with finite-dimensional sections of the semidefinite cone. As a result, approximated moments (also called pseudo-moments) of the measure solution are obtained at the price of solving a hierarchy of semidefinite optimization problems of increasing size. If the linear moment problem has a unique solution, then we can prove that our pseudo-moments converge to it.

Measures supported on infinite-dimensional spaces can be used to formulate nonlinear PDEs as linear transport problems, but so far there has been very few attempts to deal numerically with these objects, see \cite{fmw21} and references therein. In the scope of the moment-SOS hierarchy, we are still missing a systematic extension of the finite-dimensional setup to the infinite-dimensional setup. The only exception seems to be the recent work \cite{st22} which focuses exclusively on the dual SOS side and presents itself as an infinite-dimensional extension of the finite-dimensional setup of \cite{tgd18}, without attempting numerical implementation.

The purpose of this paper is to settle the ground for a genuine, systematic application of the moment-SOS hierarchy to numerically solve nonlinear PDEs as linear optimization problems on measures supported on infinite-dimensional spaces.

The outline of our paper is as follows. In Section \ref{sec:setting} we introduce our functional analytic setting and the class of nonlinear evolution PDEs we consider. Then we describe our three step infinite-dimensional moment-SOS hierarchy approach:
\begin{description}
\item Step 1 - \emph{Reformulation}: we express in Section \ref{sec:reformulation} the nonlinear PDE as a linear equation on an occupation measure supported on the solution. Under the assumption of non relaxation gap, this provides us with an equivalent linear reformulation of the nonlinear PDE. In turn, the linear measure equation is reformulated as a linear equation satisfied by the moment functions of the occupation measure;
\item Step 2 - \emph{Representation}: in Section \ref{sec:representation}, positivity and analyticity conditions are enforced on the moment functions so that they uniquely represent the occupation measure. This allows for a mathematically sound application of the moment-SOS hierarchy;
\item Step 3 - \emph{Implementation}: we use approximation properties of our solution space to formulate our moment function conditions in terms of scalar valued moments in Section \ref{sec:implementation}. 
\end{description}
This allows us to numerically approximate the solution of the nonlinear PDE as closely as desired at the price of solving a family of convex semidefinite optimization problems of increasing size, as explained in Section \ref{sec:momentsos}. 
In Section \ref{sec:heat} we illustrate in detail our approach in the case of the heat equation, a well-studied evolution PDE. We start with the linear heat equation, so that we can check consistency of the numerical solution with the known analytic solution. Then we introduce quadratic nonlinear perturbations and we illustrate that the numerics behave consistently. 

\section{Nonlinear evolution PDE}\label{sec:setting}

Let $\cH$ be a topological vector space of real valued functions in a real variable and let us denote by $\cH'$ the topological dual of $\cH$. Let $\cH_0$ be a linear subspace of $\cH$ endowed with a topology that makes the inclusion of $\cH_0$ in $\cH$ continuous and let $F: \cH_0 \rightarrow \cH_0$ be a continuous operator. 

We consider the following PDE:
\begin{equation}\label{generalPDE}
\frac{\partial u(t,x)}{\partial t} = F( u(t, x) ), \quad t \in [0,1], \quad x \in [-\pi,\pi], 
\end{equation}
with given initial condition $$u(0,.) = u_0 \in \cH_0\>$$ and periodic boundary condition $$u(t,-\pi) = u(t,\pi).$$ 
Let $\mathbb{T}:=\mathbb{R}/2\pi\mathbb{Z}$ denote the torus, i.e. the interval $[-\pi,\pi]$ with identified end points. 
Here for each $t\in [0,1],$ the function
$
u(t, \cdot): \mathbb{T} \rightarrow \mathbb{R}\>, x\mapsto u(t,x)\>
$
belongs to~$\cH_0$. 

Typical choices for $\cH$ and $\cH_0$ are the space of periodic distributions on $\mathbb{T}$ and a Sobolev space on $\mathbb{T}$, respectively. 

The operator $F$ is a polynomial function of $u$, its derivatives and integrals w.r.t. $x$. Typical examples are the viscous Burgers operator
\[
F(u(t,x))=-\frac{1}{2} \frac{\partial (u(t,x)^2)}{\partial x}+ \varepsilon \frac{\partial^2 u(t,x)}{\partial x^2}
\]
or the heat operator with a distributed nonlinearity
\[
F(u(t,x))=\frac{\partial^2 u(t,x)}{\partial x^2} + \varepsilon \int_T u(t,x)f_1(x)dx \int_T u(t,x)f_2(x)dx
\]
for given $\varepsilon>0$, $f_1 \in \cH'$, $f_2 \in \cH'$ .
More details on the choice of $F$ will follow in Section \ref{sec:reformulation}.

Denoting by $\uu$ the function
$[0,1] \rightarrow \cH_0,\> t\mapsto u(t, \cdot)$,
we view PDE \eqref{generalPDE} as an \emph{infinite-dimensional evolution equation}:
\begin{equation}\label{general-ev}
\frac{d\uu(t)}{dt} = F(\uu(t)).
\end{equation}

Throughout the paper we assume that this equation has a well-defined unique solution.
\begin{assumption}\label{existence}
{\bf (Existence and uniqueness)}
Given $u_0 \in \cH_0$, evolution equation \eqref{general-ev} has a unique solution $\uu\in \C^1([0,1] ;  \cH_0)$ with $\uu(0)=u_0$.
\end{assumption}

\section{Reformulation}\label{sec:reformulation}

\subsection{Linear measure equation}
{
Consider the time-dependent Dirac measure $\mu_t = \delta_{\uu(t)}$ supported on the solution $\uu$ of \eqref{general-ev} and let $\phi \colon [0,1] \times \cH \to {\mathbb R}$ be a test function in $\C^\infty([0,1]\times \cH; \mathbb{R})$. Since $\mu_0 = \delta_{\uu(0)}$ and $\mu_1 = \delta_{\uu(1)}$, on the one hand we have
\begin{multline} \label{lin_meas_ref-1}
\int_0^1 \frac{d}{dt}\phi(t,\uu(t)) \, dt =
\phi(1,\uu(1)) - \phi(0,\uu(0)) =
\int_{\cH} \phi(1,h) \, d\mu_1(h) - \int_{\cH} \phi(0,h) \, d\mu_0(h).
\end{multline}
On the other hand, using first the chain rule and then the fact that $\mu_t = \delta_{\uu(t)}$, we obtain
\begin{eqnarray} \label{lin_meas_ref-2}
\int_0^1 \frac{d}{dt}\phi(t,\uu(t)) \, dt &=&
\int_0^1 \left(
\frac{\partial \phi}{\partial t}(t,\uu(t))  +
\frac{\partial \phi}{\partial h}(t,\uu(t))
\frac{d}{dt} \uu(t)
\right)dt,\nonumber\\
&=& \int_0^1 \int_{\cH} \left(
\frac{\partial \phi}{\partial t}(t,h) +
\frac{\partial \phi}{\partial h}(t,h) F(h) \right) 
\, d\mu_t(h) \, dt,
\end{eqnarray}
where $\frac{\partial \phi}{\partial h}$ is the Fr\'echet derivative of  $\phi$ with respect to $h \in \cH$, which is a linear functional on $\cH$,
and $\frac{d}{dt}\uu(t) = \frac{\partial u}{\partial t}(t,x) = F(\uu(t))$ by \eqref{general-ev}.

Comparing \eqref{lin_meas_ref-1} and \eqref{lin_meas_ref-2}, we get a linear measure reformulation
\begin{multline} \label{lin_meas_ref}
\int_{\cH} \phi(1,h) \, d\mu_1(h) - \int_{\cH} \phi(0,h) \, d\mu_0(h) =
\int_0^1 \int_{\cH} \left(
\frac{\partial \phi}{\partial t}(t,h) +
\frac{\partial \phi}{\partial h}(t,h) F(h) \right) \, d\mu_t(h) \, dt,
\end{multline}
for any test function $\phi\in\C^\infty([0,1]\times \cH; \mathbb{R})$. We could therefore prove the following result.

\begin{theorem}\label{th1}
Let $\uu\in \C^1([0,1] ;  \cH_0)$
  be a solution to evolution equation \eqref{general-ev}, then the measure $\mu_t = \delta_{\uu(t)}$ on $\cH_0$ satisfies linear equation \eqref{lin_meas_ref}. Conversely, if $\mu_t = \delta_{\vv(t)}$  satisfies \eqref{lin_meas_ref} for some $\vv\in\C^1([0,1] ;  \cH_0)$, then setting $\uu :=\vv $ gives a solution $\uu$ of \eqref{general-ev} with $\uu(0)=u_0$.
\end{theorem}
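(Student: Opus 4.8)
The forward implication is essentially already contained in the computation preceding the statement, so the plan there is only to assemble it: substituting $\mu_t = \delta_{\uu(t)}$ into the left-hand side of \eqref{lin_meas_ref} and combining the fundamental theorem of calculus \eqref{lin_meas_ref-1} with the chain-rule identity \eqref{lin_meas_ref-2} --- legitimate because $\uu \in \C^1([0,1];\cH_0)$ and $\frac{d\uu}{dt} = F(\uu)$ by \eqref{general-ev} --- yields \eqref{lin_meas_ref} for every $\phi \in \C^\infty([0,1]\times\cH;\RR)$. The substance lies in the converse.

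For the converse I would substitute $\mu_t = \delta_{\vv(t)}$ into \eqref{lin_meas_ref}, so that its left-hand side equals $\phi(1,\vv(1)) - \phi(0,\vv(0))$ and its right-hand side equals $\int_0^1 \big(\frac{\partial\phi}{\partial t}(t,\vv(t)) + \frac{\partial\phi}{\partial h}(t,\vv(t))\,F(\vv(t))\big)\,dt$. Since $\vv \in \C^1([0,1];\cH_0)$, the fundamental theorem of calculus together with the chain rule independently gives $\phi(1,\vv(1)) - \phi(0,\vv(0)) = \int_0^1 \big(\frac{\partial\phi}{\partial t}(t,\vv(t)) + \frac{\partial\phi}{\partial h}(t,\vv(t))\,\frac{d\vv}{dt}(t)\big)\,dt$. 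Subtracting, the $\frac{\partial\phi}{\partial t}$ terms cancel and I am left with
\[
\int_0^1 \frac{\partial\phi}{\partial h}(t,\vv(t))\left( F(\vv(t)) - \frac{d\vv}{dt}(t) \right)\,dt = 0
\]
for every test function $\phi$, where $w(t) := F(\vv(t)) - \frac{d\vv}{dt}(t)$ lies in $\cH_0 \subseteq \cH$ because $F\colon\cH_0\to\cH_0$ and $\frac{d\vv}{dt}(t)\in\cH_0$.

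The heart of the argument is to extract from this scalar identity the vector equation $\frac{d\vv}{dt}(t) = F(\vv(t))$. I would specialise to separated test functions $\phi(t,h) = \psi(t)\,\ell(h)$ with $\psi \in \C^\infty([0,1];\RR)$ and $\ell \in \cH'$ a continuous linear functional; these are admissible since a continuous linear functional is smooth with Fr\'echet derivative $\frac{\partial\phi}{\partial h}(t,\cdot) = \psi(t)\,\ell$. The identity then reads $\int_0^1 \psi(t)\,\ell(w(t))\,dt = 0$ for all $\psi$. For fixed $\ell$ the scalar map $t \mapsto \ell(w(t))$ is continuous (as $\vv\in\C^1$ and $F,\ell$ are continuous), so the fundamental lemma of the calculus of variations forces it to vanish identically on $[0,1]$; hence $\ell(w(t)) = 0$ for every $t$ and every $\ell \in \cH'$. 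Because $\cH$, being a space of periodic distributions or a Sobolev space, is a Hausdorff locally convex space, Hahn--Banach guarantees that $\cH'$ separates its points, so $w(t) = 0$ in $\cH$; the continuous (hence injective) inclusion $\cH_0 \hookrightarrow \cH$ then gives $w(t) = 0$ in $\cH_0$, i.e. \eqref{general-ev} pointwise in $t$. The initial condition $\uu(0)=u_0$ is not read off \eqref{lin_meas_ref} but from the standing normalisation $\mu_0 = \delta_{u_0}$, since $\mu_0 = \delta_{\vv(0)}$ forces $\vv(0) = u_0$.

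I expect the separation step to be the main obstacle, or at least the one requiring care: one must confirm that the Fr\'echet derivatives of admissible test functions realise enough functionals on $\cH$ --- here the linear $\ell \in \cH'$ already suffice --- and that $\cH'$ genuinely separates the points of $\cH$, which is exactly why the standing choice of $\cH$ as a locally convex Hausdorff space matters. By contrast, the du Bois--Reymond step needs only the mild regularity supplied by $\vv \in \C^1$ and the assumed continuity of $F$, and the cancellation of the time-derivative terms is purely formal.
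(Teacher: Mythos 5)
Your proposal is correct, and it is worth noting how it relates to what the paper actually does. For the forward direction you follow the paper exactly: the paper's entire ``proof'' of Theorem~\ref{th1} is the computation \eqref{lin_meas_ref-1}--\eqref{lin_meas_ref} preceding the statement (the authors write ``We could therefore prove the following result'' and never return to it), and your first paragraph simply assembles that computation, as intended. The genuine difference is the converse: the paper gives no explicit argument for it, whereas you supply the missing one --- substitute $\mu_t=\delta_{\vv(t)}$, subtract the independently valid fundamental-theorem/chain-rule identity for $\vv$, specialize to separated test functions $\phi(t,h)=\psi(t)\,\ell(h)$ with $\ell\in\cH'$, apply the fundamental lemma of the calculus of variations to the continuous scalar function $t\mapsto\ell\bigl(F(\vv(t))-\tfrac{d\vv}{dt}(t)\bigr)$, and finish by separation of points. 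This is the natural completion and it is sound, with two caveats you correctly flag but which deserve emphasis since they are not among the paper's stated hypotheses: (i) the conclusion $w(t)=0$ needs $\cH'$ to separate points of $\cH$, i.e.\ $\cH$ Hausdorff and locally convex, which holds for the paper's typical choices (periodic distributions, Sobolev spaces) but not for an arbitrary topological vector space as in Section~\ref{sec:setting}; (ii) the claim $\uu(0)=u_0$ is not a consequence of \eqref{lin_meas_ref} by itself but of the fixed initial datum $\mu_0=\delta_{u_0}$, consistent with how Assumption~\ref{nogap} phrases the linear problem. In short: same approach as the paper where the paper has a proof, plus a correct completion where the paper has none.
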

}

The measure $\mu_t$ is referred to as \emph{Young measure}, or parametrized measure. This class of measures is widely studied in convex relaxations of nonlinear calculus of variations and optimal control problems, see e.g., \cite{y69}, \cite{r97} or \cite[Part III]{f99}. The measure $dt\,d \mu_t$ is called \emph{occupation measure}, because it measures the time a trajectory occupies a given set. It is classically used in Markov decision processes and dynamical systems, as well as, for the application of the moment-SOS hierarchy to nonlinear optimization and control problems, see e.g. \cite{lhpt08,hp17,hkw19,mp20,mwhl20,hkl20,khl21,hkkr23}.

\subsection{No relaxation gap}\label{sec:norelaxationgap}

Theorem 1 establishes that a measure supported on a solution of evolution equation \eqref{general-ev} solves linear equation \eqref{lin_meas_ref}. However, it may happen that a measure solving the linear equation \eqref{lin_meas_ref} is not supported on a solution of evolution equation \eqref{general-ev}. To preclude this situation, we make the following assumption.

\begin{assumption}\label{nogap}
{\bf (No relaxation gap)}
Linear equation \eqref{lin_meas_ref} with given initial data $\mu_0 = \delta_{\uu(0)}$ has a unique solution $\mu_t = \delta_{\uu(t)}$.
\end{assumption}

Under Assumptions 1 and 2, it follows from Theorem \ref{th1} that the solution of linear equation \eqref{lin_meas_ref} is concentrated on the solution of evolution equation \eqref{general-ev}. In other words, solving  \eqref{lin_meas_ref} solves \eqref{general-ev}.  If we cannot ensure Assumption~2 a priori, we can enforce the absence
of a relaxation gap by one of the following approaches:
\begin{itemize}
\item formulating the PDE as a convex minimization problem, see e.g. \cite{b20};
\item adding (linear or convex) conditions, e.g., entropy inequalities \cite{mwhl20};
\item assume that $F$ is convex, or consider its convex envelope \cite{hkkr23}.
\end{itemize}

\subsection{Linear moment equation}\label{sec:moments}

We proceed now to rewrite \eqref{lin_meas_ref} as a linear equation on the moments of the occupation measure. To this purpose let us define the concept of moment function for a measure supported on $\RR\times \cH_0$ where $\cH_0$ is a Hilbert space in $\cH$. For this, we adapt to our setting the notion of polynomials and then of moment function given in \cite{ikr14, i16, ik20}.

Let $\mathscr{P}$ be the set of all polynomials in the variables $t\in\RR$ and $h\in\cH_0$ of the form
\begin{equation}\label{poly}
P(t, h) := \sum_{\ell=0}^N\sum_{k=0}^M\langle t^{\otimes \ell}, c_\ell\rangle\langle h^{\otimes k}, f^{(k)}\rangle,
\end{equation}
where $c_\ell, f^{(0)}\in\RR$ and $f^{(k)}\in(\overline{\cH_0^{\otimes k}})'$, for $\ell=0, \ldots, N$ and $k=0,\ldots,M$ with $N, M\in\NN$. Here $\langle \cdot, \cdot\rangle$ denotes the duality between the considered vector spaces and their corresponding dual spaces. By convention, $\langle t^{\otimes 0}, c\rangle=1$ for all $c\in\RR$ and $\langle h^{\otimes 0}, f\rangle=1$ for all $f\in(\overline{\cH_0^{\otimes k}})'$.


Note that here all tensor products are intended to be symmetric and that, since
$\cH_0$ is a Hilbert space, the completed tensor product always corresponds to the Hilbert space tensor norm.

\begin{definition}
	{\bf (Moment function)}
Let $\ell, k\in\NN$.  Given a Radon measure $\mu$ defined on $\RR\times\cH$ and supported on $\RR\times\cH_0$ such that
\begin{equation}\label{fin-mom}
\int_{\RR\times\cH}|\langle t, 1\rangle|^\ell|\langle h, f\rangle|^k d\mu(t,h)<\infty, \ \forall f\in\cH',
\end{equation}
its \emph{$(\ell, k)-$th moment function} is the symmetric continuous multilinear functional $m^{\nu}_{\ell, k}$ on $\RR^{\otimes \ell}\oplus(\overline{\cH_0^{\otimes k}})'$ such that
\begin{equation}
\langle m^{\nu}_{\ell, k}, 1\oplus f^{(k)} \rangle=\int_{\RR \times \cH} \langle t, 1\rangle^\ell \langle h^{\otimes k},  f^{(k)} \rangle d\mu(t,h),\ \forall f^{(k)}\in(\overline{\cH_0^{\otimes k}})'.
 \end{equation}
By convention, $m^{\nu}_{0, 0}:=\mu(\RR\times\cH_0)$.
\end{definition}

\begin{definition}
{\bf (Determinacy)}
A Radon measure $\mu$ defined on $\RR\times\cH$ and supported on $\RR\times\cH_0$ is said to be \emph{determinate} if for any other Radon measure $\nu$ defined on $\RR\times\cH$ and supported on $\RR\times\cH_0$ such that $m^{\mu}_{\ell, k}=m^{\nu}_{\ell, k}$ for all $\ell, k\in\NN$, it holds $\nu=\mu$.
\end{definition}

In \eqref{lin_meas_ref} let us now consider the test function 
\begin{equation} \label{test_tensor}
\phi(t,h) := t^\ell \lambda_1(h) \cdots \lambda_k(h),
\end{equation}
where $\lambda_1,\ldots,\lambda_k\in\cH'$, and $\ell, k \in\NN$.  Note that since the measures in \eqref{lin_meas_ref} are assumed to fulfill \eqref{fin-mom} , the integrals appearing in \eqref{lin_meas_ref}  are all finite.


If $\cH$ is the space of periodic distributions on $\mathbb{T}$, {then}
\begin{equation} \label{pairing}
\lambda_j(h) = \langle h, \psi_j \rangle = \int_\mathbb{T} h(x) \psi_j(x) \, dx,
\end{equation}
where $\psi_j$ are $C^\infty$ periodic functions on $\mathbb{T}$.

The Frechet derivative of $\phi$ with respect to $h$ at the point $g\in\cH$ is given by
\begin{equation} \label{test_tensor_derivative}
{\frac{\partial \phi}{\partial h}(t,h)(g)} = t^\ell \ \sum_{j=1}^k \lambda_1(h) \cdots \widehat{\lambda_j(h)} \cdots \lambda_k(h) \lambda_j(g)
\end{equation}
where we used the notation
\[
\lambda_1(h) \cdots \widehat{\lambda_j(h)} \cdots \lambda_k(h) := \frac{\prod_{i=1}^k \lambda_i(h)}{\lambda_j(h)}.
\]

Substituting \eqref{test_tensor} {and \eqref{test_tensor_derivative}} into the linear equation \eqref{lin_meas_ref}, we get:
\begin{multline} \label{1st_discr-pre}
1^\ell \int_{\cH} \lambda_1(h) \cdots \lambda_k(h) \, d\mu_1(h) -
0^\ell \int_{\cH} \lambda_1(h) \cdots \lambda_k(h) \, d\mu_0(h) \\ =
\int_0^1 \int_{\cH} \ell t^{\ell-1}
\lambda_1(h) \cdots \lambda_k(h) \, d\mu_t(h) \, dt \\ +
\int_0^1 \int_{\cH} t^\ell \
\sum_{j=1}^k \lambda_1(h) \cdots \widehat{\lambda_j(h)} \cdots \lambda_k(h) \lambda_j(
F(h)) \, d\mu_t(h) \, dt.
\end{multline}
Choosing $\lambda_k$ as in \eqref{pairing}, we obtain the linear equation:
\begin{multline} \label{1st_discr}
1^\ell \int_{\cH} \langle h, \psi_1 \rangle \cdots \langle h, \psi_k \rangle \, d\mu_1(h) -
0^\ell \int_{\cH} \langle h, \psi_1 \rangle \cdots \langle h, \psi_k \rangle \, d\mu_0(h) \\ =
\int_0^1 \int_{\cH} \ell t^{\ell-1}
\langle h, \psi_1 \rangle \cdots \langle h, \psi_k \rangle \, d\mu_t(h) \, dt \\ +
\int_0^1 \int_{\cH} t^\ell \
\sum_{j=1}^k \langle h, \psi_1 \rangle \cdots \widehat{\langle h, \psi_j \rangle} \cdots \langle h, \psi_k \rangle \langle F(h), \psi_j \rangle
\, d\mu_t(h) \, dt.
\end{multline}

In order to formulate this equation as a linear equation on moment functions, we make the following standing assumption on the structure of the nonlinear operator $F$ appearing in evolution equation \eqref{general-ev}.

\begin{assumption}\label{polyop}
	{\bf (Polynomial operator)}
The operator $F:\cH_0\to\cH_0$ is of the form $$F(h)=\sum\limits_{s=0}^NF_s(h^{\otimes s}),$$ where $F_s: \overline{\cH_0^{\otimes s}}\to \cH_0$ is continuous. In other words, each $F_s$ can be seen as a homogeneous polynomial in $\mathscr{P}$ which is independent of $t$.
\end{assumption}

The second summand of linear equation \eqref{1st_discr} can then be written as:
\begin{multline}
 \int_0^1 \int_{\cH} t^\ell \
\sum_{j=1}^k\sum_{s=0}^N \langle h, \psi_1 \rangle \cdots \widehat{\langle h, \psi_j \rangle} \cdots \langle h, \psi_k \rangle \langle F_s(h), \psi_j \rangle
\, d\mu_t(h) \, dt\\
=
\int_0^1 \int_{\cH} t^\ell \
\sum_{j=1}^k\sum_{s=0}^N \langle h, \psi_1 \rangle \cdots \widehat{\langle h, \psi_j \rangle} \cdots \langle h, \psi_k \rangle \langle h^{\otimes s}, F_s^*(\psi_j) \rangle
\, d\mu_t(h) \, dt\\
=
\int_0^1 \int_{\cH} t^\ell \
\sum_{j=1}^k\sum_{s=0}^N \langle h^{\otimes j-1+s}, \psi_1 \otimes\cdots\otimes \widehat{\psi_j}\otimes\cdots \psi_k\otimes F_s^*(\psi_j) \rangle
\, d\mu_t(h) \, dt\\
=
\sum_{j=1}^k\sum_{s=0}^N \langle m_{\ell,  j-1+s}, \psi_1 \otimes\cdots\otimes \widehat{\psi_j}\otimes\cdots \psi_k\otimes F_s^*(\psi_j) \rangle
\end{multline}
where $F_s^*:\cH_0'\to \overline{\cH_0^{\otimes s}}$ denotes the adjoint of $F_s$. Hence equation \eqref{1st_discr} becomes a linear equation on moment functions:
\begin{multline} \label{1st_discr-mom}
 \langle m_{\ell, k}^{\delta_{1}\mu_1}, 1\oplus(\psi_1\otimes\cdots\otimes\psi_k) \rangle  -
 \langle m_{\ell, k}^{\delta_{0}\mu_0}, 1\oplus(\psi_1\otimes\cdots\otimes\psi_k) \rangle =\\
 \langle m_{\ell-1, k}^{dt\mu_t}, \ell \oplus(\psi_1\otimes\cdots\otimes\psi_k) \rangle +
\sum_{j=1}^k\sum_{s=0}^N \langle m^{dt\mu_t}_{\ell,  j-1+s}, \psi_1 \otimes\cdots\otimes \widehat{\psi_j}\otimes\cdots \psi_k\otimes F_s^*(\psi_j) \rangle.
\end{multline}

{We just proved the following reformulation of Theorem \ref{th1}:}

\begin{theorem}\label{th2}
Equation \eqref{lin_meas_ref} always implies equation \eqref{1st_discr-mom}. The converse holds  provided the measures $dt\,d\mu_t$, $\delta_{1}\,\mu_1$ and $\delta_0\,\mu_0$ are determinate.
\end{theorem}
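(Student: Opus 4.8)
The statement splits into two implications, and the forward one requires essentially no new work: it is precisely the chain of identities displayed immediately before the statement. The plan is to note that, since \eqref{lin_meas_ref} is assumed to hold for every $\phi\in\C^\infty([0,1]\times\cH;\RR)$, it holds in particular for the monomial test functions \eqref{test_tensor}. Substituting such a $\phi$, applying the chain rule, and invoking Assumption \ref{polyop} to rewrite each pairing $\langle F(h),\psi_j\rangle$ as $\sum_s\langle h^{\otimes s},F_s^*(\psi_j)\rangle$, turns \eqref{lin_meas_ref} into the moment identity \eqref{1st_discr-mom} by the very definition of the moment functions. As this holds for all $\ell,k$ and all $\psi_1,\dots,\psi_k\in\cH'$, the forward implication follows.

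For the converse the first observation is that this computation is reversible at the level of monomials. Indeed, read for every choice of $\ell,k$ and $\psi_1,\dots,\psi_k\in\cH'$, equation \eqref{1st_discr-mom} is nothing but \eqref{lin_meas_ref} evaluated at the test functions \eqref{test_tensor}: the integrability hypothesis \eqref{fin-mom} makes every integral finite, and the definition of the moment functions together with Assumption \ref{polyop} identifies the two sides term by term. By linearity this extends \eqref{lin_meas_ref} to every $\phi$ in the polynomial class $\mathscr{P}$ of \eqref{poly}; note that this class is stable under the operations entering the equation, since $\frac{\partial}{\partial t}$ lowers the degree in $t$ and, by Assumption \ref{polyop}, $\frac{\partial\phi}{\partial h}F$ sends a polynomial of degree $k$ in $h$ to one of degree $k-1+N$.

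It remains to upgrade \eqref{lin_meas_ref} from polynomial test functions to arbitrary smooth ones, and this is exactly where determinacy is used. The plan is to introduce the defect functional
\[
L(\phi):=\int_{\cH}\phi(1,h)\,d\mu_1(h)-\int_{\cH}\phi(0,h)\,d\mu_0(h)-\int_0^1\!\!\int_{\cH}\Big(\frac{\partial\phi}{\partial t}+\frac{\partial\phi}{\partial h}F\Big)d\mu_t(h)\,dt,
\]
which the previous paragraph shows to vanish on all of $\mathscr{P}$, and to prove that $L\equiv 0$ on $\C^\infty([0,1]\times\cH;\RR)$. To this end one approximates a given $\phi$, together with its derivatives $\frac{\partial\phi}{\partial t}$ and $\frac{\partial\phi}{\partial h}$, by elements of $\mathscr{P}$ and passes to the limit inside the three integrals against $\delta_1\mu_1$, $\delta_0\mu_0$ and $dt\,\mu_t$. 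Determinacy of these three measures is precisely the hypothesis that legitimises this passage: it guarantees that each measure is recovered from its moment functions, equivalently that polynomials are dense in the relevant integration spaces, so that the vanishing of $L$ on $\mathscr{P}$ propagates to every smooth $\phi$.

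The main obstacle is the limiting argument of the last paragraph, carried out on the infinite-dimensional space $\cH_0$: one must produce polynomial approximations of a smooth cylindrical function and of its Fr\'echet derivative that converge strongly enough, with the tail control furnished by \eqref{fin-mom}, to interchange limit and integral simultaneously for the two boundary measures and for the occupation measure. By contrast, the forward implication and the monomial-level equivalence are routine bookkeeping with the definition of moment function and with Assumption \ref{polyop}.
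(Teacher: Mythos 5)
Your forward implication coincides with the paper's own proof: the theorem is introduced in the paper with ``we just proved'', referring precisely to the computation you reproduce --- substitute the monomial test functions \eqref{test_tensor} into \eqref{lin_meas_ref}, compute the Fr\'echet derivative, and use Assumption \ref{polyop} together with the definition of the moment functions to arrive at \eqref{1st_discr-mom}. Your further observation that this computation is reversible, so that \eqref{1st_discr-mom} is exactly \eqref{lin_meas_ref} tested on the monomials \eqref{test_tensor} (all integrals being finite by \eqref{fin-mom}) and hence, by linearity, on all of $\mathscr{P}$, is also the intended content of the equivalence.

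The problem lies in the last step of your converse, and you half-acknowledge it yourself. You assert that determinacy of $dt\,d\mu_t$, $\delta_1\,\mu_1$ and $\delta_0\,\mu_0$ is ``equivalently'' the density of polynomials in the relevant integration spaces, and that this lets you pass from the vanishing of your defect functional $L$ on $\mathscr{P}$ to its vanishing on $\C^\infty([0,1]\times\cH;\RR)$. Neither half of this is justified. Determinacy, as defined in the paper (uniqueness of the Radon measure having the given moment functions), is not equivalent to polynomial density: even on $\RR$, the classical results of M.~Riesz give density of polynomials in $L^2(\mu)$ as a \emph{consequence} of determinacy, not a characterization, and the infinite-dimensional analogue is far from automatic. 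More seriously, even granting $L^p$-density for $\phi$ itself, your functional $L$ also involves $\partial_t\phi$ and $\partial_h\phi\, F$, so you would need polynomial approximants $p_n\to\phi$ for which simultaneously $\partial_t p_n\to\partial_t\phi$ and $\partial_h p_n\,F\to\partial_h\phi\,F$ in $L^1$ of the three measures --- a graph-norm type of density that determinacy alone does not furnish, and that your text does not construct. So what you offer for the converse is a plan with the decisive analytic step missing, as you concede when you call it ``the main obstacle''. In fairness, the paper itself does not carry out this step either: its proof consists of the forward computation and the definitional monomial-level identification, with the converse folded into the determinacy hypothesis without further argument; but judged as a proof, your proposal names the gap rather than closing it.
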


\section{Representation}\label{sec:representation}

So far, we have reformulated linear measure equation \eqref{lin_meas_ref} as linear moment function equation \eqref{1st_discr-mom}. Theorem \ref{th2} states that the two equations are equivalent provided all the involved measures are determinate, i.e. they are uniquely determined by their moment functions. In this section, we consider the case when $\cH$ is the space ${\mathscr D}(\mathbb{T})$ of periodic distributions on $\mathbb{T}$ and $\cH_0$ is a Sobolev space $W_2^{-k}(\mathbb{T})$ (see below). In Theorem \ref{thm-pss} we provide conditions on a sequence of multilinear functionals to be the moment functions of some measure and to ensure the determinacy. 

In the following all the functions spaces are real vector spaces of real valued functions unless explicitly stated.
The space $\C^\infty(\mathbb{T})$ can be constructed as the projective limit of Sobolev spaces $W_2^{\alpha}(\mathbb{T})$ for all $\alpha\in\NN$ (see \cite[Thm 9.0.1]{garret-notes} and note that the torus is there defined as $\mathbb{R}/\mathbb{Z}$), where the Sobolev space $W_2^{\alpha}(\mathbb{T})$ is defined as the completion of $\C^\infty(\mathbb{T})$ w.r.t.\! the following norm (see e.g. \cite[p.156-159]{p65})$$\|\varphi\|_{W_2^{\alpha}(\mathbb{T})}:=\left(\sum_{n\in\mathbb Z}(1+n^2)^\alpha\left|\frac{1}{\sqrt{2\pi}}\int_\mathbb{T}\varphi(x)e^{-inx}dx\right|^2\right)^{\frac 12}.
$$
This construction induces a natural projective limit topology 
which makes it a separable nuclear space, as by \cite[Lemma 33]{Mel-diploma} for any $\alpha\in\NN$ there exists $\beta\in\NN$ such that the embedding $W_2^{\beta}(\mathbb{T})\hookrightarrow W_2^{\alpha}(\mathbb{T})$ is Hilbert-Schmidt. In fact, this holds for all $\beta>\alpha+\frac 12$. (This can be also derived by the more general results in \cite[pp.162-172]{p65}).
Let us denote by ${\mathscr D}(\mathbb{T})$ the space $\C^\infty(\mathbb{T})$ equipped with the projective limit topology, and let us denote its dual by ${\mathscr D}'(\mathbb{T})$. Then for any $s\in\RR$ we define $$W_2^s(\mathbb{T}):=\left\{f\in{\mathscr D}'(\mathbb{T}):
\|f\|_{W_2^s(\mathbb{T})}<\infty\right\}.$$
In the following we identify the dual of $W_2^s(\mathbb{T})$ with $W_2^{-s}(\mathbb{T})$. 

\begin{theorem}\label{thm-pss}
Let $m:=(m_{\ell, k})_{\ell, k\in\NN}$ be such that each $m_{\ell, k}$ is a multilinear functional on {$\RR^\ell\times\left(\C^\infty(\mathbb{T})\right)^k$} symmetric in the first $\ell$ variables as well as in the second $k$ variables.
If

\begin{enumerate}
\item 
{
\begin{description}
\item $\displaystyle\sum_{\ell,k,\ell',k'}\langle m_{\ell+\ell', k+k'},  (c_\ell c_{\ell'})\oplus (f^{(k)}\otimes f^{(k')}) \rangle\geq 0,$\\

\item $\displaystyle\sum_{\ell,k,\ell',k'}\langle m_{\ell+\ell'+1, k+k'}-m_{\ell+\ell'+2, k+k'},  (c_\ell c_{\ell'})\oplus (f^{(k)}\otimes f^{(k')}) \rangle\geq 0,$\\
\end{description}
}
{$\forall c_\ell, c_{\ell'}\in\RR,  f^{(k)}\in \C^\infty(\mathbb{T}^{k}), f^{(k')}\in \C^\infty(\mathbb{T}^{k'}),$}

\item $$ \sum\limits_{\ell=1}^\infty\frac{1}{\sqrt[2\ell]{\langle m_{2\ell,0}, 1^{\otimes 2\ell}\oplus 0\rangle}} =\infty$$ and there exists a countable total subset $E$ of {$\C^\infty(\mathbb{T})$} such that
$$\sum_{k=1}^\infty \frac{1}{\sqrt[2k]{\langle m_{0,2k}, 0\oplus f^{\otimes 2k} \rangle}} =\infty, \quad \forall\ f\in E,$$

\item for each $\ell, k\in\NN$, there exists $j_{\ell, k}\in\NN$ such that
$m_{\ell, k}$ is $\|\cdot\|^\ell\times\|\cdot\|_{W_2^{j_{\ell, k}}(\mathbb{T})}^{k}-$continuous,
\end{enumerate}

then there exists a unique Radon measure $\mu$ which is supported in {$[0,1]\times W_2^{-\beta}(\mathbb{T})\subseteq \RR\times \mathscr{D}'(\mathbb{T})$} for any $\beta>\max\{j_{0,2}, j_{1,1}, j_{2,0}\}+\frac 12$, and whose sequence of moment functions is $m$.
\end{theorem}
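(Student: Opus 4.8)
The plan is to read the sequence $m=(m_{\ell,k})$ as a single Riesz functional $L$ on the polynomial algebra $\mathscr{P}$ of \eqref{poly}, setting $L(P):=\sum_{\ell,k}\langle m_{\ell,k},c_\ell\oplus f^{(k)}\rangle$, and then to extract the three assertions of the theorem -- existence, localization of support, and uniqueness -- one at a time, each from a single group of hypotheses. With this dictionary, hypothesis~1(a) says exactly that $L(P^2)\ge 0$ for every $P\in\mathscr{P}$ (so $L$ is positive semidefinite), while hypothesis~1(b) reads $L\bigl((t-t^2)P^2\bigr)\ge 0$, the localizing inequality attached to the constraint $t(1-t)\ge 0$.

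\textbf{Existence.} First I would invoke an infinite-dimensional Haviland-type representation theorem for nuclear spaces, in the spirit of \cite{ikr14,i16}. The underlying test space $\C^\infty(\mathbb{T})=\mathscr{D}(\mathbb{T})$ is separable and nuclear (as recalled before the statement), so that a positive functional on the symmetric tensor algebra that is continuous in a suitable sense is represented by integration against a Radon measure on the topological dual $\RR\times\mathscr{D}'(\mathbb{T})$. Positivity is supplied by hypothesis~1(a); the continuity needed to land in the topological, rather than merely algebraic, dual is supplied by hypothesis~3.

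\textbf{Localization to $W_2^{-\beta}(\mathbb{T})$.} This is the technical core. I would use hypothesis~3 quantitatively: the $\|\cdot\|_{W_2^{j_{\ell,k}}(\mathbb{T})}$-continuity of $m_{\ell,k}$, together with the Hilbert--Schmidt embeddings $W_2^{\beta}(\mathbb{T})\hookrightarrow W_2^{\alpha}(\mathbb{T})$ valid for $\beta>\alpha+\tfrac12$, lets one run a Minlos-type concentration argument that pushes the support from all of $\mathscr{D}'(\mathbb{T})$ down to the Hilbert space $W_2^{-\beta}(\mathbb{T})$. The key point is that concentration on a Hilbert space is governed by the second-order moment (covariance) data alone, namely $m_{2,0}$, $m_{1,1}$ and $m_{0,2}$; this is precisely why the threshold for $\beta$ involves only $j_{2,0}$, $j_{1,1}$, $j_{0,2}$. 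Concretely, finiteness of $\int\|h\|_{W_2^{-\beta}}^2\,d\mu$ amounts to $\sum_n(1+n^2)^{-\beta}\,m_{0,2}(e_n,e_n)<\infty$, and the continuity bound $m_{0,2}(e_n,e_n)\lesssim(1+n^2)^{j_{0,2}}$ makes this a geometric-type summability that converges exactly when $\beta>j_{0,2}+\tfrac12$; the extra $\tfrac12$ is the Hilbert--Schmidt gap turning the bounded covariance into a trace-class operator on $W_2^{\beta}(\mathbb{T})$.

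\textbf{Interval constraint and uniqueness.} With a representing measure $\mu$ on $\RR\times W_2^{-\beta}(\mathbb{T})$ in hand, I would restrict hypothesis~1(b) to the $t$-variable ($k=k'=0$) to obtain $\int(t-t^2)\,p(t)^2\,d\mu\ge 0$ for every univariate polynomial $p$; by the classical one-dimensional localization theory this forces the $t$-marginal onto $\{t:t(1-t)\ge0\}=[0,1]$. Uniqueness, i.e. determinacy in the sense of the definition above, would then follow from hypothesis~2 via a Nussbaum--Petersen-type criterion as in \cite{ik20}: the first Carleman series handles the $t$-direction, while the second, holding for every $f$ in a total set $E\subseteq\C^\infty(\mathbb{T})$, handles each $h$-direction, so that $m$ determines $\mu$ uniquely. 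I expect the localization step to be the main obstacle, since it is there that the nuclear structure of $\mathscr{D}(\mathbb{T})$ and the Hilbert structure of $W_2^{-\beta}(\mathbb{T})$ must be reconciled and the exact dependence of $\beta$ on the continuity indices justified; by contrast, existence, the interval constraint, and determinacy reduce fairly directly to known representation and determinacy theorems.
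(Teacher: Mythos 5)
Your plan assigns each hypothesis a single, separate job: positivity 1(a) plus continuity (3) are supposed to give existence, the localizing inequality 1(b) the interval constraint, and the Carleman conditions (2) only uniqueness. This decomposition contains a genuine gap at the very first step. There is no Haviland-type theorem that produces a representing Radon measure from positivity on squares plus continuity, neither in infinite dimensions nor even on $\RR^n$ for $n\ge 2$: Haviland's theorem requires $L\ge 0$ on the cone of \emph{all} polynomials nonnegative on the prescribed support set, whereas hypothesis 1 only gives $L\ge 0$ on squares and on $t(1-t)$ times squares. Since nonnegative polynomials need not be sums of squares (Motzkin-type examples, which embed into the present setting through any two-dimensional subspace of $\C^\infty(\mathbb{T})$), there exist continuous positive-semidefinite functionals that are not moment functionals, and continuity does not remove this obstruction because it is already finite-dimensional. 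In the actual existence proofs for this kind of statement (\cite{ikr14}, \cite{i16}, \cite{ikkm23}), the quasi-analyticity/Carleman conditions that you quarantine in the uniqueness step are the engine of \emph{existence} as well: one either runs a GNS construction and uses Nussbaum's quasi-analytic-vectors theorem \cite{Nuss} to get essential self-adjointness and a joint spectral measure, or restricts to finitely many directions, applies Nussbaum's finite-dimensional theorem (where positivity alone is insufficient), and passes to a projective limit. Without hypothesis 2 your existence step fails at the first move, and everything after it is conditional.

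For contrast, the paper does none of this work itself: its proof is a one-line application of the general nuclear-space moment theorem \cite[Theorem 3.3]{ikkm23} to $V=\RR\oplus\C^\infty(\mathbb{T})$ with the product topology, in which positivity, quasi-analyticity and continuity are used jointly and inseparably. The parts of your proposal that are downstream of existence are essentially sound and do mirror what that theorem does internally: your second-moment computation, namely that $m_{0,2}(e_n,e_n)\lesssim (1+n^2)^{j_{0,2}}$ makes $\sum_n (1+n^2)^{j_{0,2}-\beta}$ summable exactly when $\beta>j_{0,2}+\frac12$, is precisely the mechanism behind the paper's remark that the support is controlled by the indices with $\ell+k=2$; and the $[0,1]$-localization via Hausdorff-type conditions on the $t$-marginal (using that compactly supported measures are determinate) is fine once a measure exists. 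But as written the proposal does not prove the theorem: you must either invoke \cite[Theorem 3.3]{ikkm23} directly, as the paper does, or rebuild its operator-theoretic existence argument, in which hypothesis 2 enters from the start rather than only at the end.
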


Equivalently, if we denote by $L_m$ the Riesz functional associated to the sequence $m$, i.e.,
\begin{eqnarray*}
L_m: &\mathscr{P}&\to\RR \\
 \  &a=\displaystyle\sum_{\ell=0}^N\sum_{k=0}^M\langle t^{\otimes \ell}, c_\ell\rangle\langle h^{\otimes k}, f^{(k)}\rangle& \mapsto L_m(a):=\displaystyle\sum_{\ell=0}^N\sum_{k=0}^M\langle m_{\ell, k}, c_\ell \oplus f^{(k)} \rangle
\end{eqnarray*}
then the conditions of Theorem \ref{thm-pss} can be written in the following more concise form as:
\begin{enumerate}
\item $L_m(a^2)\geq 0, L_m(b\:a^2)\geq 0, \:\forall a\in \mathscr{P},\,
b(t, h):
=\langle t^{\otimes 1}, 1\rangle(1-\langle t^{\otimes0}, 1\rangle)\langle h^{\otimes 0},1\rangle$,
\item $$\sum_{\ell=1}^\infty \frac{1}{\sqrt[2\ell]{L_m(\langle t^{\otimes 2\ell}, 1^{\otimes 2\ell}\rangle\langle h^{\otimes {0}}, 0\rangle)}} =\infty$$
and there exists a countable total subset $E$ of {$\C^\infty(\mathbb{T})$} such that
$$\sum_{k=1}^\infty \frac{1}{\sqrt[2k]{L_m(\langle t^{\otimes 0}, 1\rangle\langle h^{\otimes {2k}}, f^{\otimes {2k}}\rangle)}} =\infty, \quad \forall\ f\in E,$$
\item for each $\ell, k\in\NN$, there exist $j_{\ell, k}\in\NN$ and $Q_{\ell, k}>0$ such that
{$$\left|L_m(\langle t^{\otimes \ell}, c_\ell\rangle\langle h^{\otimes k}, f^{(k)}\rangle)\right |\leq Q_{\ell, k}|c_\ell|\| f^{(k)}\|_{W_2^{j_{\ell, k}}(T)}^{k},  \ \forall c\in\RR, f\in \C^\infty(\mathbb{T}).$$}

\end{enumerate}

\begin{proof}
In \cite[Theorem 3.3]{ikkm23} the authors establish solvability conditions for the moment problem on the symmetric tensor algebra $S(V)$ of a nuclear space $V$ and so for measures supported on the topological dual of $V$. Applying this general result for {$V=\RR\oplus \C^\infty(\mathbb{T})$ } endowed with the product topology given by the euclidean topology on $\RR$ and the projective topology on {$\C^\infty(\mathbb{T})$}  mentioned above, we obtain the desired conclusion. Note the $d-$th homogeneous component $S(V)_d$ of $S(V)$ is given by
$S(V)_d=\bigoplus\limits_{\ell+k=d} S(\RR)_\ell\otimes S(\C^\infty(\mathbb{T}))_k$. 
	\end{proof}

Note that in Theorem \ref{thm-pss}-3 the boundedness is imposed for every $\ell, k\in \NN$, while the support of the representing measure is contained in $[0,1]\times W_2^{-\beta}(\mathbb{T})$, i.e. the support is controlled by the bound for $\ell, k\in \NN$ such that $\ell+k=2$ (for further details see the proof of \cite[Theorem 3.3-(4)]{ikkm23}). 
 Also notice that, since the support of the representing measure is in $[0,1]\times W_2^{-\beta}(\mathbb{T})$, the positivity conditions in Theorem \ref{thm-pss}-1 in the $t$ variable are in accordance with the representability conditions in Hausdorff Theorem \cite{haus21}, i.e. a linear $L:\RR[X]\to \RR$ has a $[0,1]-$representing measure if and only if $L(p^2)\geq 0$ and $L(X(1-X)p)\geq 0$ for any $p\in\RR[X]$.

\section{Implementation}\label{sec:implementation}

Theorem \ref{thm-pss} and linear equation \eqref{1st_discr-mom} are formulated in terms of the moment functions. Now we proceed towards a computer implementation by replacing the moment functions by scalar valued moments. Note that in the following definition the moment functions are applied to complex valued test functions, resulting in complex valued functionals which are invariant under complex conjugation. It is straightforward that the complex version of Theorem \ref{thm-pss} holds, where in condition 1 for the Riesz functional $a^2$ is replaced by $a\bar{a}$ and in condition 2 for the Riesz' functional $f^{\otimes 2k}$ is replaced by $f^{\otimes k}\otimes \bar{f}^{\otimes k}$.

\begin{definition}\label{def:moment}
	{\bf (Moment)}
Given a Radon measure $\mu$ on $\RR\times{\mathscr D}'(\mathbb{T})$ with moment function sequence $m:=(m_{\ell, k})_{\ell, k\in\NN}$, its \emph{moment} indexed by $\ell \in {\mathbb N}, n_1, \ldots, n_k \in {\mathbb Z}$ for $k \in {\mathbb N}$ is the complex number 
\begin{equation}\label{moment-y}
y_{\ell,n_1,\ldots,n_k} :=\langle m_{\ell,k}, 1\oplus(\psi_1\otimes\cdots\otimes\psi_k)\rangle=
\int_{\mathbb R} \int_{{\mathscr D}'(\mathbb{T})}  t^\ell \langle h, \psi_1 \rangle \cdots \langle h,  \psi_k  \rangle  \, d\mu(h,t)
\end{equation}
where the test functions $\psi_j\in \C^\infty(\mathbb{T}; \mathbb{C})$ are set to the complex exponentials
\begin{equation}\label{basis}
\psi_j(x) := \frac{1}{2 \pi}e^{-i\, n_j x}.
\end{equation}
By convention, for $k=0$ we set $y_{\ell,n_1,\ldots,n_k}=y_{\ell, \emptyset}$.
\end{definition}


This particular class of test functions is dense in $\C^\infty(\mathbb{T}; \mathbb{C})$. The complex exponentials are the eigenfunctions of the Laplacian operator on $\mathbb{T}$. There may be other choices more adapted to the other operators. For example, if the linear part of the polynomial operator $F$ is a second order differential operator (with appropriate self-adjoint boundary conditions), we can replace the complex exponentials by the eigenfunctions of the corresponding Sturm-Liouville problem.

In the above definition each $$ h_{n_j}:=\langle h, \psi_j \rangle$$ is a Fourier coefficient of the periodic distribution $h$ expressed with its Fourier series
\begin{equation}\label{fourierseries}
h(x) = \sum_{n \in \mathbb Z} h_n e^{{\mathbf i}nx}.
\end{equation}

In the moment index ${\ell,n_1,\ldots,n_k}$,
integer $\ell$ is called the \emph{time degree}. Integer $k$ is called the \emph{algebraic degree}, whereas integer $\max_{j=1,\ldots,k}|n_j|$ is called the \emph{harmonic degree}.

Notice that the moment sequence $y$ satisfies the following symmetry properties:
\begin{align}
\label{moms_conj}
\overline{y_{\ell,n_1,\ldots,n_k}} &= y_{\ell,-n_1,\ldots,-n_k},\\
\label{moms_perm}
y_{\ell,n_{\sigma(1)},\ldots,n_{\sigma(k)}} &= y_{\ell,n_1,\ldots,n_k} \quad
\forall \sigma\in S_k,
\end{align}
where the bar denotes the complex conjugate and $S_k$ denotes the symmetric group, the set of all permutations of $k$ elements.

More specifically, for our evolution problem we define the \emph{initial moments}, \emph{terminal moments}, and \emph{occupation moments} respectively by
\begin{align}
\label{init_moms}
y^0_{\ell,n_1,\ldots,n_k} &:=\langle m_{\ell, k}^{\delta_0\mu_0}, {1\oplus} (\psi_1\otimes\cdots\otimes\psi_k)\rangle=
0^\ell \int_{{\mathscr D}'(\mathbb{T})} h_{n_1} \cdots h_{n_k} \, d\mu_0(h),\\
\label{term_moms}
y^1_{\ell,n_1,\ldots,n_k} &:=\langle m_{\ell, k}^{\delta_1\mu_1}, {1\oplus}(\psi_1\otimes\cdots\otimes\psi_k)\rangle=
1^\ell \int_{{\mathscr D}'(\mathbb{T})}  h_{n_1} \cdots h_{n_k} \, d\mu_1(h),\\
\label{occup_moms}
y^{[0,1]}_{\ell,n_1,\ldots,n_k} &=\langle m_{\ell, k}^{dt \mu_t}, 1\oplus(\psi_1\otimes\cdots\otimes\psi_k)\rangle=
\int_0^1 t^\ell \int_{{\mathscr D}'(\mathbb{T})} \,  h_{n_1} \cdots h_{n_k}  \, d\mu_t(h) \, dt.
\end{align}


For each $s=1, \ldots, N$, the adjoint homogeneous operator appearing in the linear moment function equation \eqref{1st_discr-mom} takes the concrete form:
\begin{equation}\label{Fs}
F_s^*(\psi_j)=\sum_{r_1,\ldots, r_s \in \mathbb Z}a_{r_1,\ldots, r_s}^{s,j}\psi_{r_1}\ldots\psi_{r_s}\end{equation}
 for some $a_{r_1,\ldots, r_s}^{s,j}\in\RR$ and $r_1,\ldots, r_s\in\mathbb{Z}$.
With these notations, the linear moment function equation \eqref{1st_discr-mom} becomes the linear moment equation:
\begin{equation} \label{linear_moms_ref}
y^1_{\ell,n_1,\ldots,n_k} - y^0_{\ell,n_1,\ldots,n_k} =
\ell y^{[0,1]}_{\ell-1,n_1,\ldots,n_k} +
\sum_{j=1}^k\sum_{s=0}^N \sum_{r_1,\ldots, r_s  \in \mathbb Z}a_{r_1,\ldots, r_s}^{s,j}y^{[0,1]}_{\ell,n_1,\ldots,n_{j-1}, r_1, \ldots, r_s, n_{j+1}, \ldots, n_k}
\end{equation}
where $\ell,k\in\NN, n_1,\ldots,n_k\in\mathbb{Z}$.

We can now reformulate the conditions of Theorem \ref{thm-pss} (which imply the existence of the representing measure) in terms of a moment sequence
$y:=(y_{\ell, n_1, \ldots, n_k})_{\ell, k\in \NN} \subset {\mathbb C}$.

We first consider the positivity conditions appearing in Theorem \ref{thm-pss}-1 and reformulate them in terms of moment matrices associated to the sequence $y=(y_{\ell, n_1, \ldots, n_k})_{\ell, k\in \NN}$.

Given $\ell \in \NN$, let us construct a Hermitian matrix whose entries are indexed by
$n_1,\ldots,n_k,$ $n'_1,\ldots,n'_{k'}\in{\mathbb Z}$ for $k,k' \in \NN$, namely
$$
\bM_\ell(y) := \left[y_{\ell,n_1,\ldots,n_k,-n'_1,\ldots,-n'_{k'}}\right]_{n_1,\ldots,n_k,n'_1,\ldots,n'_{k'} \in \mathbb Z}.
$$
Then consistenly with \cite{l01,l10} or \cite[Chapter 1]{hkl20} we define the \emph{moment matrix}	
$$
\bM(y) := \left[\bM_{\ell+\ell'}(y)\right]_{\ell,\ell' \in \NN}
$$
and the \emph{localizing matrix}
$$
\bM(t(1-t)y) = \left[\bM_{\ell+\ell'+1}(y)- \bM_{\ell+\ell'+2}(y) \right]_{\ell,\ell'\in \NN}.
$$
The positivity conditions become positive semidefiniteness conditions $\bM(y) \succeq 0$ and $\bM(t(1-t)y)\succeq 0$ which are infinite-dimensional linear matrix inequalities in the moment sequence $y$, compare with e.g. \cite[Theorem 3.2]{l10}.
 
Next we turn to the growth conditions appearing in Theorem \ref{thm-pss}-2:
$$\sum_{\ell=1}^\infty \frac{1}{\sqrt[2\ell]{y_{2\ell, \emptyset} }} = \infty$$
and taking
$E:= \left(\frac{1}{2\pi}e^{-\mathbf{i} n x}\right)_{n\in \mathbb Z}$
yields for each $n \in \mathbb Z$
$$\sum_{k=1}^\infty \frac{1}{\sqrt[2k]{y_{0, n,\cdots,n, -n,\cdots,-n} }} = \infty$$
where for each $k$, in the expression under the root, the term $n$ and $-n$ appear $k$ times each. This can be interpreted as an infinite-dimensional generalization of the multivariate Carleman condition, see e.g. \cite{Carl26} and \cite{Nuss}.
	
Finally, we deal with the growth conditions appearing in Theorem \ref{thm-pss}-(3). 
For each $\ell, k\in\NN$, there exist $j_{\ell, k}\in\NN$ such that
$$\sum_{n_1, \ldots, n_k \in \mathbb Z} (1+n_1^2)^{-j_{\ell, k}}\cdots(1+n_k^2)^{-j_{\ell, k}} |y_{\ell, n_1, \ldots, n_k}|^2<\infty.$$

Theorem \ref{thm-pss} can be reformulated in terms of the moments as follows.

\begin{theorem}\label{thm-pssy}
Let $y:=(y_{\ell,n_1,\ldots,n_k}) \subset \mathbb C$ be a sequence indexed by 
$\ell\in\NN, n_1,\ldots,n_k\in\mathbb{Z}$ for $k \in \mathbb N$ such that \eqref{moms_conj} and \eqref{moms_perm} hold. If
\begin{enumerate}
\item $\bM(y) \succeq 0$, $\bM(t(1-t)y) \succeq 0$,
\item $$\sum_{\ell=1}^\infty \frac{1}{\sqrt[2\ell]{y_{2\ell, \emptyset} }} = \infty$$
and for each $n \in \mathbb Z$
$$\sum_{k=1}^\infty \frac{1}{\sqrt[2k]{y_{0, n,\cdots,n, -n, \cdots, -n} }} = \infty$$
where for each $k$, in the expression under the root, the term $n$ and $-n$ appear $k$ times each,
\item for each $\ell, k\in\NN$, there exist $j_{\ell, k}\in\NN$ such that
$$\sum_{n_1, \ldots, n_k \in \mathbb Z} (1+n_1^2)^{-j_{\ell, k}}\cdots(1+n_k^2)^{-j_{\ell, k}} |y_{\ell, n_1, \ldots, n_k}|^2<\infty$$
\end{enumerate}
then there exists a unique Radon measure $\mu$ which is supported in {$[0,1]\times W_2^{-\beta}(\mathbb{T})\subseteq \RR\times \mathscr{D}'(\mathbb{T})$} for any $\beta>\max\{j_{0,2}, j_{1,1}, j_{2,0}\}+\frac 12$, and whose moment sequence is $y$.
\end{theorem}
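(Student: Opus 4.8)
The plan is to prove Theorem \ref{thm-pssy} by recognizing it as the Fourier/exponential-basis transcription of the complex version of Theorem \ref{thm-pss}, and then invoking the latter. The starting point is the dictionary between the scalar moments $y$ and the moment functions $m=(m_{\ell,k})$: by Definition \ref{def:moment} one has $y_{\ell,n_1,\ldots,n_k}=\langle m_{\ell,k},1\oplus(\psi_{n_1}\otimes\cdots\otimes\psi_{n_k})\rangle$ with $\psi_n(x)=\frac{1}{2\pi}e^{-inx}$. First I would reconstruct $m$ from $y$: define each $m_{\ell,k}$ on tensor products of complex exponentials (equivalently on trigonometric polynomials) by multilinear extension of $y$, noting that the symmetry relations \eqref{moms_conj}--\eqref{moms_perm} are exactly what is needed for the resulting functionals to be symmetric and invariant under complex conjugation, so that they qualify as moment-function candidates in the sense of Theorem \ref{thm-pss}. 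Their continuous extension to all of $C^\infty(\mathbb T;\mathbb C)$ will be supplied by condition 3, treated below.

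Next I would translate the three conditions in turn. For condition 1, expand an arbitrary $a=\sum c_{\ell,\mathbf n}\,t^\ell\psi_{n_1}\cdots\psi_{n_k}\in\mathscr P$ in the exponential basis; using $\overline{\psi_n}=\psi_{-n}$ one finds that the $\big((\ell,\mathbf n),(\ell',\mathbf n')\big)$ entry of the Gram form $L_m(a\bar a)$ equals $y_{\ell+\ell',n_1,\ldots,n_k,-n'_1,\ldots,-n'_{k'}}$, i.e. the corresponding entry of $\bM(y)$, while the localizing form $L_m(t(1-t)a\bar a)$ reproduces $\bM_{\ell+\ell'+1}(y)-\bM_{\ell+\ell'+2}(y)$. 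Hence $\bM(y)\succeq0$ and $\bM(t(1-t)y)\succeq0$ are equivalent to the positivity conditions of Theorem \ref{thm-pss}-1 on trigonometric-polynomial inputs, and then on all of $\mathscr P$ by density together with the continuity secured in condition 3. For condition 2, I would take the total set $E=\{\psi_n:n\in\mathbb Z\}$ (the complex exponentials are total in $C^\infty(\mathbb T;\mathbb C)$), observe $\langle m_{2\ell,0},1^{\otimes2\ell}\oplus0\rangle=y_{2\ell,\emptyset}$, and, in the complex version where $f^{\otimes2k}$ is replaced by $f^{\otimes k}\otimes\bar f^{\otimes k}$, that $\langle m_{0,2k},0\oplus(\psi_n^{\otimes k}\otimes\psi_{-n}^{\otimes k})\rangle=y_{0,n,\ldots,n,-n,\ldots,-n}$; the two Carleman-type series then match verbatim.

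The principal obstacle is condition 3, where the continuity of $m_{\ell,k}$ in the norms $\|\cdot\|_{W_2^{j_{\ell,k}}(\mathbb T)}$ must be matched with the weighted square-summability of $y$. Here I would use the explicit description $\|\varphi\|_{W_2^{j}(\mathbb T)}^2=\sum_{n}(1+n^2)^{j}|\hat\varphi(n)|^2$. Writing $\varphi_i=\sum_{n_i}c^{(i)}_{n_i}\,2\pi\,\psi_{n_i}$ and expanding $m_{\ell,k}(\varphi_1,\ldots,\varphi_k)=\sum_{\mathbf n}\prod_i(2\pi c^{(i)}_{n_i})\,y_{\ell,\mathbf n}$, a Cauchy--Schwarz split against the weights $\prod_i(1+n_i^2)^{\pm j_{\ell,k}}$ bounds $|m_{\ell,k}(\varphi_1,\ldots,\varphi_k)|$ by a constant multiple of $\prod_i\|\varphi_i\|_{W_2^{j_{\ell,k}}(\mathbb T)}$. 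This simultaneously (i) shows the series for $m_{\ell,k}$ converges absolutely, so that $m_{\ell,k}$ is well defined and extends continuously from trigonometric polynomials to all of $C^\infty(\mathbb T)$, and (ii) yields exactly the continuity required in Theorem \ref{thm-pss}-3, with the same exponent $j_{\ell,k}$. The delicate points are the normalization constants arising from the choice $\psi_n=\frac{1}{2\pi}e^{-inx}$ versus the $\frac{1}{\sqrt{2\pi}}$ in the Sobolev norm, and the bookkeeping of the symmetric Hilbert-Schmidt tensor structure—it is precisely the paper's convention that $\overline{\cH_0^{\otimes k}}$ carries the Hilbert tensor norm that makes these $\ell^2$-type weights appear.

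Finally, having verified conditions 1--3 of Theorem \ref{thm-pss} for the reconstructed $m$, I would apply the complex version of Theorem \ref{thm-pss} to obtain a unique Radon measure $\mu$ supported in $[0,1]\times W_2^{-\beta}(\mathbb T)$ for every $\beta>\max\{j_{0,2},j_{1,1},j_{2,0}\}+\frac12$ whose moment functions are $m$. Evaluating these moment functions on the exponentials $\psi_{n}$ recovers $y$ by construction, and uniqueness of $\mu$ is inherited directly from Theorem \ref{thm-pss}, which gives the stated conclusion.
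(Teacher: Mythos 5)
Your proposal is correct and follows essentially the same route as the paper's own proof: reconstruct the moment functions $m_{\ell,k}$ from $y$ on tensor products of complex exponentials, use the symmetry relations \eqref{moms_conj}--\eqref{moms_perm} and condition 3 to obtain symmetric, conjugation-invariant, $W_2^{j_{\ell,k}}$-continuous multilinear functionals, transfer conditions 1--2 by density of the exponentials with $E=\{\psi_n\}_{n\in\mathbb{Z}}$, and invoke the complex version of Theorem \ref{thm-pss}. Your explicit Cauchy--Schwarz estimate for condition 3 is simply the hands-on form of the paper's appeal to the canonical identification $\left((W_2^{j_{\ell,k}})^{\otimes k}\right)' \cong (W_2^{-j_{\ell,k}})^{\otimes k}$, so no substantive difference remains.
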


\begin{proof}
For any $\ell, k\in\NN$, we use \eqref{moment-y} to define the linear functional $m_{\ell, k}$ on the linear span in ${\mathbb C}^{\otimes \ell} \otimes (\C^\infty(\mathbb{T}; \mathbb{C}))^{\otimes k}$ of all $1\oplus\psi_1\otimes\cdots\otimes \psi_k$ for $\psi_1, \ldots, \psi_k$ as in \eqref{basis} ($n_1, \ldots, n_k\in\mathbb Z$). By condition 3 and using the canonical identifications 
$$\left(W_2^{j_{\ell, k}}(\mathbb{T}; \mathbb{C}))^{\otimes k}\right)' \cong\left(W_2^{j_{\ell, k}}(\mathbb{T}; \mathbb{C}))'\right)^{\otimes k}\cong\left(W_2^{-j_{\ell, k}}(\mathbb{T}; \mathbb{C}))\right)^{\otimes k},$$
$m_{\ell, k}$ extends to a continuous linear functional on $\mathbb{C}^{\otimes \ell} \otimes W_2^{j_{\ell, k}}(\mathbb{T}; \mathbb{C}))^{\otimes k}$. By \eqref{moms_perm}, $m_{\ell, k}$ is a multilinear functional on {$\mathbb{C}^\ell\times\left(W_2^{j_{\ell, k}}(\mathbb{T}; \mathbb{C}))\right)^{k}$} symmetric in the first $\ell$ variables as well as in the second $k$ variables. By \eqref{moms_conj}, $m_{\ell, k}$ is invariant under complex conjugation, i.e. it defines a real valued multilinear functional on {$\mathbb{R}^\ell\times\left(W_2^{j_{\ell, k}}(\mathbb{T})\right)^{k}$}. Using the density of the complex exponentials and the continuity of $m_{\ell, k}$, condition 1 implies that Theorem \ref{thm-pss}-1 holds. By condition 2, Theorem \ref{thm-pss}-2 holds for
$E:= \left(\frac{1}{2\pi}e^{-\mathbf{i} n x}\right)_{n\in \mathbb Z}$. Hence, Theorem \ref{thm-pss} gives the desired conclusion.
\end{proof}

\section{Approximation with the moment-SOS hierarchy}\label{sec:momentsos}

Let us summarize our developments so far. We have shown that under Assumptions \ref{existence}, \ref{nogap} and \ref{polyop}, given the moments $y^0$ of an initial occupation measure concentrated on an initial data, if there are sequences $y^{[0,1]}$ and $y^1$ solving linear equation \eqref{linear_moms_ref} and satisfying the conditions of Theorem \ref{thm-pssy}, then they are the sequences of moments of an occupation measure  and a terminal measure concentrated on the solution of PDE \eqref{generalPDE}.

In order to implement the moment-SOS hierarchy on a computer, we proceed as in the finite-dimensional case, see e.g. \cite[Chapter 2]{hkl20}. We just truncate the sequences up to a given time degree, algebraic degree and harmonic degree. The resulting conditions on the truncated sequences is a finite-dimensional \emph{semidefinite optimization} problem that can be solved on a computer with e.g. interior-point algorithms. Since the moment problem is truncated, there are less variables and constraints, and we speak of a \emph{moment relaxation}. 

The solution to the moment relaxation are truncated sequences of \emph{pseudo-moments}, in the sense that they are not necessarily moments of measures: the truncated semidefinite constraints are necessary, but not sufficient conditions for the sequences to have representing measures. However, when the truncation degrees go to infinity, the conditions become necessary and sufficient, and we speak of \emph{convergence} of the moment-SOS hierarchy. Quantifying the speed of convergence, as a function of the time degree, algebraic degree and harmonic degree, is an interesting open research question.

In this paper we are only interested in solving nonlinear PDEs, there is no functional to be minimized. In practice, we can however control the growth of the pseudo-moment sequences in each moment relaxation by minimizing the sum of the traces of the moment matrices of the occupation measure and terminal measure. 

The moment relaxation, a finite-dimensional semidefinite optimization problem, has a dual semidefinite optimization problem which can be interpreted as a polynomial SOS problem, as in the finite-dimensional case, see e.g. \cite[Chapter 2]{l10} or \cite[Chapter 2]{hkl20}. This SOS interpretation lies however out of the scope of this paper.

It was already observed that the moment sequences satisfy symmetry properties \eqref{moms_conj} and \eqref{moms_perm}. Symmetries can be exploited to reduce to size of the semidefinite relaxations, and hence to improve scalability of the approach when the truncation degrees increase. Sparsity of the moment and localizing matrices, with their specific Hankel and Toeplitz structure, as well sparsity of the linear moment equation, can also be exploited by semidefinite programming solvers.

Finally, once pseudo-moments are available, it is desirable to recover approximately the graph of the solution of the PDE. For that purpose, we can use the Christoffel-Darboux kernel \cite{lpp22}: this SOS polynomial constructed from the pseudo-moments has small values on the graph of the function to be recovered. This graph recovery technique was studied in detail in \cite{m21}. In the context of PDE solving, it would be interesting to study its convergence properties.

\section{Case study: the  heat equation}\label{sec:heat}

\subsection{Linear heat equation}

Let us first consider the linear heat equation \eqref{generalPDE} with
$F(u(t,x)) = \frac{\partial^2 u(t,x)}{\partial x^2}.$

\subsubsection{Linear moment equation}

Using the duality pairing \eqref{pairing} and noticing that
\begin{equation}\label{eq:2nd-der}
\langle h^{\prime\prime}, \psi \rangle = \langle h, \psi^{\prime\prime} \rangle,
\end{equation}
linear equation \eqref{1st_discr} reads
\begin{multline*}
1^\ell \int_{\cH} \langle h, \psi_1 \rangle \cdots \langle h, \psi_k \rangle \, d\mu_1(h) -
0^\ell \int_{\cH} \langle h, \psi_1 \rangle \cdots \langle h, \psi_k \rangle \, d\mu_0(h) \\ =
\int_0^1 \int_{\cH} \ell t^{\ell-1}
\langle h, \psi_1 \rangle \cdots \langle h, \psi_k \rangle \, d\mu_t(h) \, dt \\ +
\int_0^1 \int_{\cH} t^\ell \
\sum_{j=1}^k \langle h, \psi_1 \rangle \cdots \widehat{\langle h, \psi_j \rangle} \cdots \langle h, \psi_k \rangle \langle h, \psi_j^{\prime\prime} \rangle
\, d\mu_t(h) \, dt.
\end{multline*}
In the duality pairing, we choose complex exponentials
$\psi_j(x) = \frac{1}{2\pi}e^{-\mathbf{i} n_j x}$, where $n_1,\ldots,n_k \in {\mathbb Z}$
so that corresponding linear functionals are the Fourier coefficients of $h \in \cH$ denoted
$h_n := \langle h, \frac{1}{2\pi}e^{-\mathbf{i} n x} \rangle$.
Since
$\left(e^{-\mathbf{i} n x}\right)^{\prime\prime} = - n^2 e^{-\mathbf{i} n x}$,
the linear equation becomes
\begin{multline}\label{2nd_discr} 
1^\ell \int_{\cH} h_{n_1} \cdots h_{n_k} \, d\mu_1(h) -
0^\ell \int_{\cH} h_{n_1} \cdots h_{n_k} \, d\mu_0(h) \\ =
\int_0^1 \int_{\cH} \ell t^{\ell-1}
h_{n_1} \cdots h_{n_k} \, d\mu_t(h) \, dt -
\sum_{j=1}^k n_j^2
\int_0^1 \int_{\cH} t^\ell \
h_{n_1} \cdots h_{n_k}
\, d\mu_t(h) \, dt.
\end{multline}
Following Definition \ref{def:moment} and notation \eqref{init_moms}--\eqref{occup_moms}, the initial, terminal and occupation moments are given by
\begin{align*}
y^0_{\ell,n_1,\ldots,n_k} &=
0^\ell \int_{\cH} h_{n_1} \cdots h_{n_k} \, d\mu_0(h),\\
y^1_{\ell,n_1,\ldots,n_k} &=
1^\ell \int_{\cH} h_{n_1} \cdots h_{n_k} \, d\mu_1(h),\\
y^{[0,1]}_{\ell,n_1,\ldots,n_k} &=
\int_0^1 \int_{\cH} t^\ell \, h_{n_1} \cdots h_{n_k} \, d\mu_t(h) \, dt,
\end{align*}
and we can rewrite the linear moment equation \eqref{linear_moms_ref} as
\begin{equation}\label{heatmom}
y^1_{\ell,n_1,\ldots,n_k} - y^0_{\ell,n_1,\ldots,n_k} =
\ell y^{[0,1]}_{\ell-1,n_1,\ldots,n_k} - \sum_{j=1}^k n_j^2 \ y^{[0,1]}_{\ell,n_1,\ldots,n_k}.
\end{equation}
Here $\ell,k = 0,1,2,\ldots,\,n_1,\ldots,n_k=0,\pm 1,\pm 2,\ldots$.

Notice that in the case $\ell=0$, the first term on the right hand side does not appear since the test function does not depend on $t$.
Notice also that $y^0_{\ell,n_1,\ldots,n_k}=0$ for $\ell>0$
and that $y^1_{\ell,n_1,\ldots,n_k}$ is independent of $\ell$.

\subsubsection{Analytic solution}

The solution $u$ can be expressed as a Fourier series:
\[
u(t,x) = \sum_{n \in \mathbb Z} u_n(t) e^{\mathbf{i}nx}.
\]
From the linear heat equation $$\frac{\partial u(t,x)}{\partial t}=\frac{\partial^2 u(t,x)}{\partial x^2}$$
it follows that
$$\frac{d}{dt} u_n(t)=-n^2 u_n(t)$$
for any $n$, which yields
$$u_n(t)=u_n(0)e^{-n^2 t}.$$
Therefore the solution reads
$$u(t,x) = \sum_{n \in \mathbb Z} u_n(0) e^{-n^2t}e^{\mathbf{i}nx}.$$

\begin{lemma}\label{analyticmoments}
Given $\ell, k \in \mathbb{N}$ and $n_1,\ldots,n_k \in \mathbb{Z}$, let $N:=n^2_1+\cdots+n^2_k$. The initial, terminal and occupation moments solving the linear equation \eqref{heatmom} can be expressed in closed-form as follows:
\begin{align*}
y^0_{\ell,n_1,\ldots,n_k} &= 0^{\ell} u_{n_1}(0) \cdots u_{n_k}(0),\\
y^1_{\ell,n_1,\ldots,n_k} &= 1^{\ell} u_{n_1}(0) \cdots u_{n_k}(0) e^{-N},\\
y^{[0,1]}_{\ell,n_1,\ldots,n_k}&=
\left\{
\begin{array}{ll}
\frac{u_0(0)^k}{\ell+1}& \text{if $(n_1, \ldots, n_k)=({0,\ldots,0})$}\\
u_{n_1}(0) \cdots u_{n_k}(0)\left(\frac{\ell!}{N^{\ell+1}}-e^{-N}\sum\limits_{j=1}^{\ell+1} \frac{\ell!}{N^j(\ell-j+1)!}\right)& \text{if $(n_1, \ldots, n_k)\neq (0,\ldots,0).$}
\end{array}
\right.
\end{align*}
\end{lemma}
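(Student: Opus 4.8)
The plan is to compute the three families of moments directly from the fact that $\mu_t = \delta_{\uu(t)}$ is a Dirac measure concentrated on the explicitly known analytic solution, so that each integral against $\mu_t$ collapses to an evaluation at $\uu(t)$. The only input needed is the formula for the Fourier coefficients of the solution already derived above, namely that the $n$-th coefficient of $\uu(t)$ is $u_n(t) = u_n(0)\,e^{-n^2 t}$. Writing $N := n_1^2 + \cdots + n_k^2$, every integrand $h_{n_1}\cdots h_{n_k}$ evaluated at $h = \uu(t)$ becomes $u_{n_1}(0)\cdots u_{n_k}(0)\,e^{-Nt}$.

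First I would treat the initial and terminal moments, which require no real computation. Using definitions \eqref{init_moms} and \eqref{term_moms} with $\mu_0 = \delta_{\uu(0)}$ and $\mu_1 = \delta_{\uu(1)}$, the spatial integral collapses and one is left with the prescribed time factor times the product of Fourier coefficients at the relevant time. For $y^0$ this gives the factor $0^\ell$ together with the coefficients $u_{n_j}(0)$ (since $e^{-n_j^2\cdot 0}=1$); for $y^1$ it gives $1^\ell$ together with $u_{n_j}(1) = u_{n_j}(0)\,e^{-n_j^2}$, whose product over $j$ assembles into the factor $e^{-N}$. Both closed forms follow immediately.

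The substantive step is the occupation moment. Using $\mu_t = \delta_{\uu(t)}$ in \eqref{occup_moms} and factoring out the time-independent initial coefficients reduces it to the scalar integral
\[
y^{[0,1]}_{\ell,n_1,\ldots,n_k} = u_{n_1}(0)\cdots u_{n_k}(0)\int_0^1 t^\ell e^{-Nt}\,dt.
\]
When $N = 0$, that is all $n_j = 0$, the integral is $\tfrac{1}{\ell+1}$ and the prefactor is $u_0(0)^k$, yielding the first case. When $N \neq 0$, I would evaluate $\int_0^1 t^\ell e^{-Nt}\,dt$ either by $\ell$-fold integration by parts or, more cleanly, by the substitution $s = Nt$, which turns it into $N^{-(\ell+1)}\gamma(\ell+1,N)$ with $\gamma$ the lower incomplete gamma function; inserting the standard integer-order identity $\gamma(\ell+1,N) = \ell!\bigl(1 - e^{-N}\sum_{j=0}^{\ell} N^j/j!\bigr)$ gives
\[
\int_0^1 t^\ell e^{-Nt}\,dt = \frac{\ell!}{N^{\ell+1}} - e^{-N}\sum_{j=0}^{\ell} \frac{\ell!}{j!\,N^{\ell-j+1}}.
\]

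The only bookkeeping that requires care is matching this tail sum to the form in the statement, and this is where any difficulty lies, though it is purely mechanical. Reindexing the finite sum by $j \mapsto \ell - j + 1$ sends the range $\{0,\ldots,\ell\}$ to $\{1,\ldots,\ell+1\}$ and the summand $\ell!/(j!\,N^{\ell-j+1})$ to $\ell!/(N^{j}(\ell-j+1)!)$, reproducing exactly the expression claimed. Multiplying back by $u_{n_1}(0)\cdots u_{n_k}(0)$ closes the second case. I do not anticipate any genuine obstacle: the result is a direct evaluation of a Dirac integral followed by an elementary integral, the sole point to watch being the index shift in the finite sum (equivalently, the closed form of the incomplete gamma function at integer order). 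As an independent consistency check, one may verify a posteriori that the three closed forms satisfy the recursion \eqref{heatmom}.
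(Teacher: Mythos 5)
Your proposal is correct, and its skeleton coincides with the paper's proof: collapse the Dirac integrals to evaluations on the analytic solution, factor out $u_{n_1}(0)\cdots u_{n_k}(0)$ to reduce the occupation moment to the scalar integral $\int_0^1 t^\ell e^{-Nt}\,dt$, and split into the cases $N=0$ and $N\neq 0$. The one place you diverge is in evaluating that integral for $N\neq 0$: you substitute $s=Nt$ to recognize $N^{-(\ell+1)}\gamma(\ell+1,N)$ and invoke the standard integer-order identity $\gamma(\ell+1,N)=\ell!\bigl(1-e^{-N}\sum_{j=0}^{\ell}N^j/j!\bigr)$, followed by the index shift $j\mapsto \ell-j+1$ (which you carry out correctly), whereas the paper proves the same closed form from scratch by induction on $\ell$, using integration by parts to get the recursion $I_{\ell+1}=\frac{1}{N}\bigl((\ell+1)I_\ell-e^{-N}\bigr)$ and then reindexing inside the inductive step. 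The two routes buy different things: yours is shorter and makes the structure of the answer transparent (the tail sum is just the truncated exponential series coming from the incomplete gamma function), at the cost of citing an external identity; the paper's is fully self-contained and elementary, at the cost of a slightly longer induction. Your closing suggestion to verify the closed forms against the recursion \eqref{heatmom} is a sensible extra check, though not needed once the integral is evaluated.
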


\begin{proof}
The initial and terminal moments follow readily from the Fourier series expansion.
The occupation moments are given by
$$y^{[0,1]}_{\ell,n_1,\ldots,n_k} = u_{n_1}(0) \cdots u_{n_k}(0) \int_0^1 t^{\ell} e^{-Nt}dt.$$
First consider the case $(n_1, \ldots, n_k)=({0,\ldots,0})$. Then, using the definiton of $y_{\ell, 0,\ldots,0}$ and observing that in this case $N=0$, we easily get:
\[
y_{\ell, 0,\ldots,0}=\underbrace{u_0(0)\cdots u_0(0)}_{k\:\text{times}}\int_0^1 t^{\ell} dt=\frac{u_0(0)^k}{\ell+1}.
\]
For the case $(n_1, \ldots, n_k)\neq({0,\ldots,0})$, we set for convenience $I_{\ell}:=\int_0^1 t^{\ell} e^{-Nt}dt$. Then the conclusion will follow at once by
\begin{equation}\label{I_l}
    I_{\ell}=\frac{\ell!}{N^{\ell+1}}-e^{-N}\sum\limits_{j=1}^{\ell+1} \frac{\ell!}{N^j(\ell-j+1)!},
\end{equation}
which by induction holds for all $\ell\in\mathbb{N}$.
Indeed, a direct computation of the integral $I_{\ell}$ provides the base case, that is, \[
I_0=\int_0^1 e^{-Nt}dt=-\frac{e^{-Nt}}{N}\bigg|_{t=0}^{t=1}=\frac{1}{N}-\frac{e^{-N}}{N},\]
i.e. \eqref{I_l} holds for $\ell=0$.
Suppose now that \eqref{I_l} holds for $\ell=l$ with $l\geq 0$ and let us prove it for $\ell=l+1$. Integrating by parts and using the definition of $I_l$ we obtain that
\begin{align*}
    I_{l+1}&=\int_0^1 t^{l+1} e^{-Nt}dt\\
           &=-\frac{t^{l+1}e^{-Nt}}{N}\bigg|_{t=0}^{t=1}+\int_0^1(l+1)t^l\frac{e^{-Nt}}{N}dt\\
           &=\frac{1}{N}\left((l+1)I_l-e^{-N}\right).
\end{align*}
This by inductive assumption becomes
\begin{align*}
    I_{l+1}&=\frac{1}{N}\left(\frac{l!(l+1)}{N^{l+1}}-e^{-N}\sum\limits_{j=1}^{l+1} \frac{l!(l+1)}{N^j(l-j+1)!}-e^{-N}\right)\\
    &=\frac{(l+1)!}{N^{l+2}}-e^{-N}\sum\limits_{j=1}^{l+1} \frac{(l+1)!}{N^{j+1}(l-j+1)!}-\frac{e^{-N}}{N}\\
     &=\frac{(l+1)!}{N^{l+2}}-e^{-N}\sum\limits_{j=0}^{l+1} \frac{(l+1)!}{N^{j+1}(l-j+1)!}\\
     &=\frac{(l+1)!}{N^{l+2}}-e^{-N}\sum\limits_{r=1}^{l+2} \frac{(l+1)!}{N^{r}(l+1-r+1)!},
\end{align*}
where in the last step we used the change of variable $r=j+1$. Hence, by induction, \eqref{I_l} holds for all $\ell\in\mathbb{N}$ which yields the conclusion.
\end{proof}

\subsection{Nonlinear heat equation}

\subsubsection{Distributed nonlinearity}

Consider a nonlinear heat PDE \eqref{generalPDE} with a distributed quadratic nonlinearity, e.g.
\begin{equation} \label{eq:heat-nonlin-dist}
F(u(t,x)) = \frac{\partial^2 u(t,x)}{\partial x^2} + \epsilon \int_{-1}^1 u(t,x)f_1(x)dx \int_{-1}^1 u(t,x)f_2(x)dx
\end{equation}
where $f_1, f_2 \in {\mathcal C}^\infty(T)$ are given functions, and $\epsilon \in \RR$ is a given small number.

Let us choose e.g. the real valued functions $f_1=\psi_{m_1}+\psi_{-m_1}$, $f_2=\psi_{m_2}+\psi_{-m_2}$ for $m_1,m_2 \in {\mathbb N}$ and $\psi_{m_j}(x) = \frac{1}{2\pi} e^{-{\mathbf i}{m_j} x}$ as in Section \ref{sec:implementation}.

	In linear moment equation \eqref{1st_discr} we substitute
	$F(h) = h^{\prime\prime} + \epsilon (h_{m_1}+h_{-m_1})(h_{m_2}+h_{-m_2}) \cdot 1$
	(we could in principle replace the constant function $1$ by any real valued trigonometric polynomial). Using \eqref{eq:2nd-der} we have
	$$
	\langle F(h), \psi_j \rangle = \langle h, \psi_j^{\prime\prime} \rangle + \epsilon (h_{m_1}+h_{-m_1})(h_{m_2}+h_{-m_2}) \langle 1, \psi_j \rangle.
	$$
	Using further $\phi_n^{\prime\prime} = -n^2 \phi_n$ and $\langle 1, \phi_n \rangle = \delta_{n0}$ (the orthogonality of the complex exponentials) we obtain instead of \eqref{2nd_discr} the following:
	\begin{multline} \label{2nd_discr-heat_nonlin_dist}
	1^\ell \int_{\cH} h_{n_1} \cdots h_{n_k} \, d\mu_1(h) -
	0^\ell \int_{\cH} h_{n_1} \cdots h_{n_k} \, d\mu_0(h) \\ =
	\int_0^1 \int_{\cH} \ell t^{\ell-1}
	h_{n_1} \cdots h_{n_k} \, d\mu_t(h) \, dt -
	\sum_{j=1}^k n_j^2
	\int_0^1 \int_{\cH} t^\ell \
	h_{n_1} \cdots h_{n_k}
	\, d\mu_t(h) \, dt \\ - \epsilon
	\int_0^1 \int_{\cH} t^\ell \ \sum_{j=1}^k
	\delta_{n_j0} h_{n_1} \cdots \widehat{h_{n_j}} \cdots h_{n_k} (h_{m_1}+h_{-m_1})(h_{m_2}+h_{-m_2})
	\, d\mu_t(h) \, dt.
	\end{multline}
	In terms of moments, we obtain finally instead of \eqref{heatmom} the following:
	\begin{multline} \label{linear_moms_ref-heat_nonlin_dist}
	y^1_{\ell,n_1,\ldots,n_k} - y^0_{\ell,n_1,\ldots,n_k} =
	\ell y^{[0,1]}_{\ell-1,n_1,\ldots,n_k} - \sum_{j=1}^k n_j^2 \ y^{[0,1]}_{\ell,n_1,\ldots,n_k} \\
	- \epsilon \sum_{j=1}^k \delta_{n_j0}
	\left(y^{[0,1]}_{\ell,n_1,\ldots,\widehat{n_j},\dots,n_k,m_1,m_2} +
	y^{[0,1]}_{\ell,n_1,\ldots,\widehat{n_j},\dots,n_k,m_1,-m_2}\right. \\ \left.+
	y^{[0,1]}_{\ell,n_1,\ldots,\widehat{n_j},\dots,n_k,-m_1,m_2} +
	y^{[0,1]}_{\ell,n_1,\ldots,\widehat{n_j},\dots,n_k,-m_1,-m_2}\right)
	\end{multline}
	for $\ell,k = 0,1,2,\ldots,\,n_1,\ldots,n_k=0,\pm 1,\pm 2,\ldots$
	(and again, for $\ell=0$ the first term on the right hand side does not appear).

\subsubsection{Local nonlinearity}

Let us consider now a nonlinear heat PDE \eqref{generalPDE} with a distributed quadratic nonlinearity, e.g.
\begin{equation} \label{eq:heat-nonlin-local}
F(u(t,x))  = \frac{\partial^2 u(t,x)}{\partial x^2} + \epsilon u(t,x)^2
\end{equation}
where $\epsilon \in \RR$ is a given small number.

In linear moment equation \eqref{1st_discr} we substitute $F(h) = h^{\prime\prime} + \epsilon h^2$.
Using \eqref{eq:2nd-der} we have
$$
\langle F(h), \psi_j \rangle = \langle h, \psi_j^{\prime\prime} \rangle + \epsilon \langle h^2, \psi_j \rangle.
$$
Using the Fourier series expansion \eqref{fourierseries}, by convolution we get
\begin{equation}
h^2 = \sum_{n, m \in \mathbb Z} h_m h_{n-m} e^{\mathbf{i} n x}.
\end{equation}
Using this together with
$\phi_n^{\prime\prime} = -n^2 \phi_n$
we obtain instead of \eqref{2nd_discr} the following:
\begin{multline} \label{2nd_discr-heat_nonlin_local}
1^\ell \int_{\cH} h_{n_1} \cdots h_{n_k} \, d\mu_1(h) -
0^\ell \int_{\cH} h_{n_1} \cdots h_{n_k} \, d\mu_0(h) \\ =
\int_0^1 \int_{\cH} \ell t^{\ell-1}
h_{n_1} \cdots h_{n_k} \, d\mu_t(h) \, dt -
\sum_{j=1}^k n_j^2
\int_0^1 \int_{\cH} t^\ell \
h_{n_1} \cdots h_{n_k}
\, d\mu_t(h) \, dt \\ - \epsilon
\int_0^1 \int_{\cH} t^\ell \ \sum_{j=1}^k
h_{n_1} \cdots \widehat{h_{n_j}} \cdots h_{n_k}
\left(\sum_{m \in \mathbb Z} h_m h_{n_j-m}\right)
\, d\mu_t(h) \, dt.
\end{multline}
In terms of moments, we obtain finally instead of \eqref{heatmom} the following:
\begin{multline} \label{linear_moms_ref-heat_nonlin_local}
y^1_{\ell,n_1,\ldots,n_k} - y^0_{\ell,n_1,\ldots,n_k} =
\ell y^{[0,1]}_{\ell-1,n_1,\ldots,n_k} - \sum_{j=1}^k n_j^2 \ y^{[0,1]}_{\ell,n_1,\ldots,n_k}
- \epsilon \sum_{j=1}^k \sum_{m \in \mathbb Z}
y^{[0,1]}_{\ell,n_1,\ldots,\widehat{n_j},\dots,n_k,m,n_j-m}
\end{multline}
for $\ell,k = 0,1,2,\ldots,\,n_1,\ldots,n_k=0,\pm 1,\pm 2,\ldots$
(and again, for $\ell=0$ the first term on the right hand side does not appear). 

\subsection{Numerical experiments}

We developed Matlab codes to generate the semidefinite moment relaxations in YALMIP dual format and call the external semidefinite solver Mosek for solving the nonlinear heat PDE. The main files {\tt heatmom$\_$qd.m} for quadratic distributed nonlinearity and {\tt heatmom$\_$ql.m} for quadratic local nonlinearity, as well as their dependencies, can be downloaded at \begin{center}\href{https://homepages.laas.fr/henrion/software/heatmom/}{\tt homepages.laas.fr/henrion/software/heatmom/}\end{center}

The moment relaxations are generated by various values of the time degree, algebraic degree and harmonic degree, as defined in Section \ref{sec:implementation}.
Decision variables are the occupation moment vector $y^{[0,1]}$ and the terminal moment vector $y^1$. The initial moment vector $y^0$ is given, and it is computed from an initial condition \[
u(0,x)= \sum_{k\in \mathbb Z} u_k(0) e^{{\mathbf i} k x}\] with finitely many nonzero Fourier coefficients. For our experiments we took $$u_{(-1,0,1)}(0)=(1,1,1).$$

In Table \ref{tab:sizes} we report sizes of the moment vector and moment matrix for various degrees. General purpose semidefinite solvers on a standard computer can deal with semidefinite matrices of size of a few hundreds and a few tens of thousands of variables. For example, on our laptop, for degrees (4,2,4) and (4,4,2) the semidefinite relaxation is solved in less than 0.5 sec. For degrees (4,4,4), (6,4,4) resp. (6,4,6) it takes approx. 13, 24 resp. 544 secs.

For the linear heat PDE ($\epsilon=0$) we can compare the pseudo-moments obtained by optimization and the analytic moments calculated in Lemma \ref{analyticmoments}.
We report in Figure \ref{fig:mommatch} the relative accuracy between the pseudo-moments and the analytic occupation moments. When all the degrees are equal to 4, we can see that approx. 95\% of the pseudo-moments are approximating the exact moments with relative accuracy $10^{-5}$.

For the heat PDE with distributed nonlinearity, we do not have the analytic moments, but in Figure \ref{fig:mommatch_qd} we report on the discrepancy between the computed pseudo-moments and the analytic moments for $\epsilon=0$, for distinct values of $\epsilon$. Similarly, for the hear PDE with local nonlinearity, we report in Figure \ref{fig:mommatch_ql} on the discrepancy between the computed pseudo-moments and the analytic moments for $\epsilon=0$, for distinct values of $\epsilon$.

\section*{Acknowledgements}
This research was supported through the program "Oberwolfach Research Fellows" by the Mathematisches Forschungsinstitut Oberwolfach in 2023. The last three authors wish to thank the Banff International Research Station for the opportunity to work on this project during the workshop 23w5033 and the research in teams 23rit010. M. Infusino is indebted to the Baden--W\"urttemberg Stiftung for the financial support to this work by the Eliteprogramme for Postdocs and is a member of GNAMPA group of INdAM. S. Kuhlmann was partially supported by the Ausschuss f\"ur Forschungsfragen~(AFF) from the University of Konstanz. V. Vinnikov would like to thank Milken Families Foundation Chair in Mathematics for a partial support of his research.\\

\section{Tables and Figures}

\begin{table}[h]\begin{center}
\begin{tabular}{c|cc}
(time, algebraic, harmonic) & moment & moment \\
degrees & vector size & matrix size \\ \hline
(2, 2, 2) & 63 & 12 \\
(4, 2, 2) & 105 & 18 \\
(6, 2, 2) & 147 & 24 \\
(6, 2, 4) & 385 & 40 \\
(2, 4, 2) & 378 & 42 \\
(4, 4, 2) & 630 & 63 \\
(6, 4, 2) & 882 & 84 \\
(4, 4, 4) & 3575 & 165 \\
(6, 4, 4) & 5005 & 220 \\
(6, 4, 6) & 16660 & 420 \\
(6, 6, 4) & 35035 & 880  \\
(6, 6, 6) & 189924 & 2240
\end{tabular}
\caption{\label{tab:sizes}Moment vector and matrix sizes for various degrees.}
\end{center}\end{table}

\begin{figure}[h]\begin{center}
\includegraphics[width=0.60\textwidth]{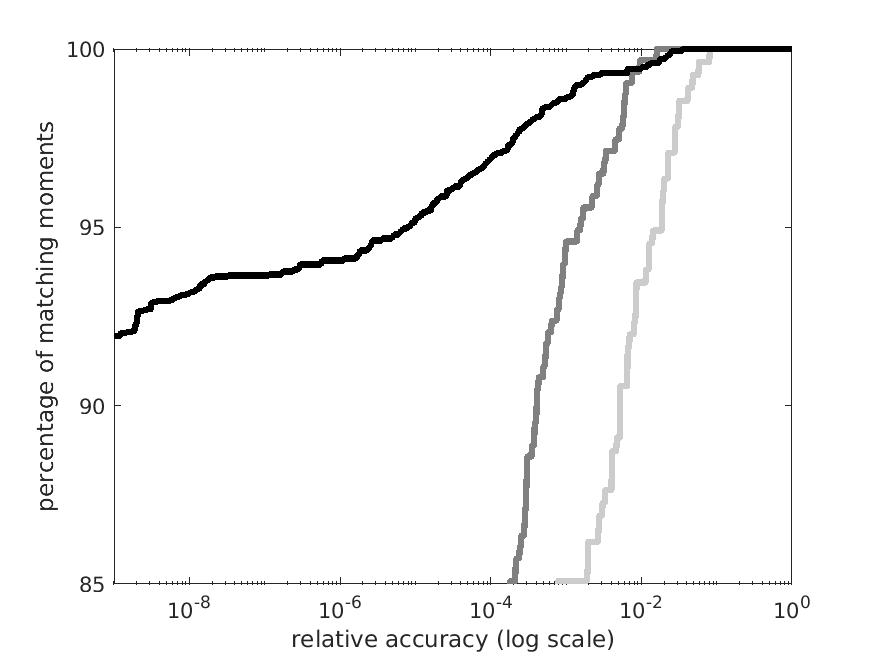}
\caption{\label{fig:mommatch}Percentage of matching occupation moments versus relative accuracy for distinct time, algebraic and harmonic degrees: (4,2,4) light gray, (4,4,2) dark gray, (4,4,4) black.}
\end{center}\end{figure}

\begin{figure}[h]\begin{center}
		\includegraphics[width=0.60\textwidth]{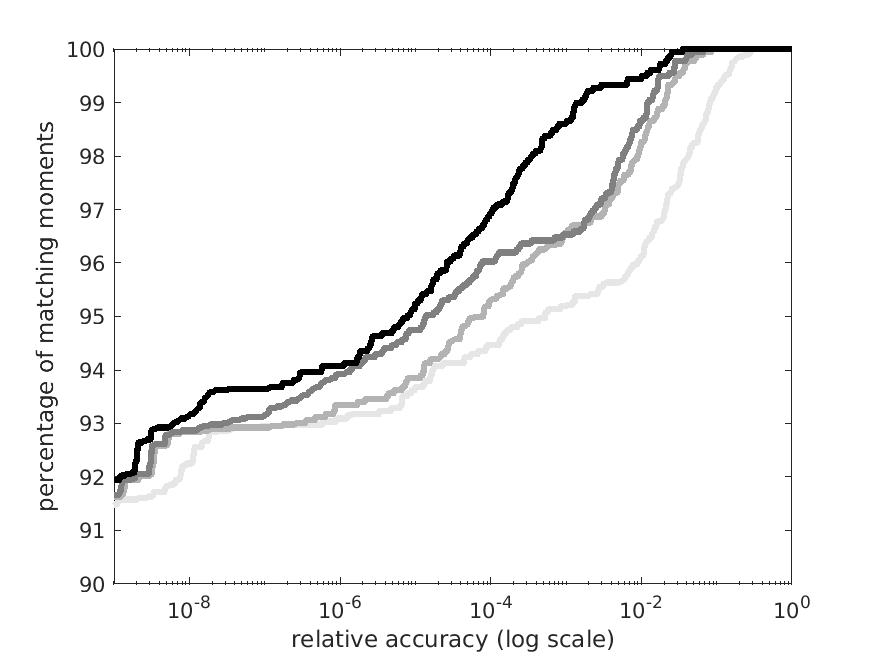}
		\caption{\label{fig:mommatch_qd}Distributed nonlinearity: percentage of matching occupation moments versus relative accuracy for distinct  values of $\epsilon$: $0$ black, $10^{-6}$ dark gray, $10^{-3}$ gray, $1$ light gray.}
\end{center}\end{figure}

\begin{figure}[t]\begin{center}
		\includegraphics[width=0.60\textwidth]{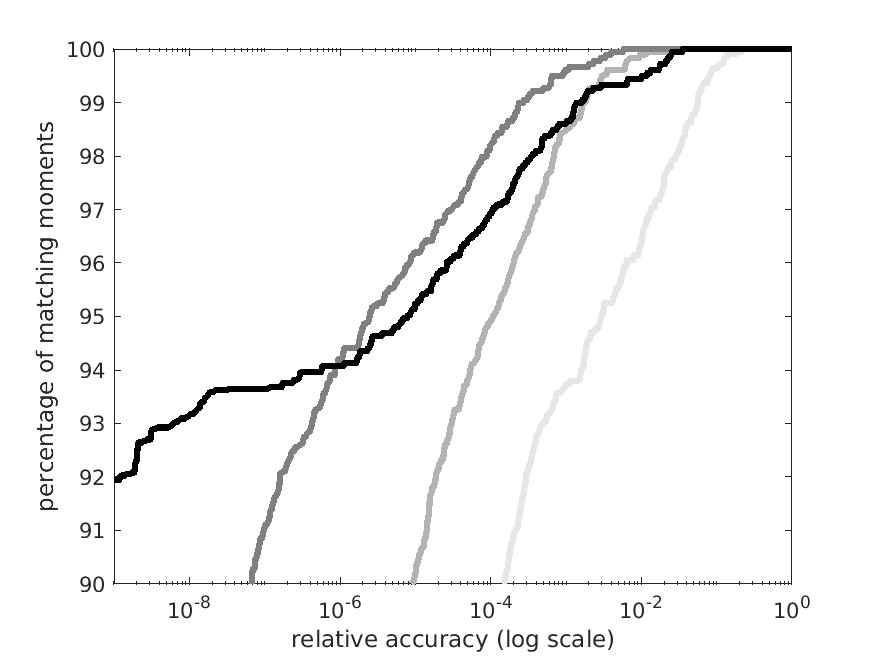}
		\caption{\label{fig:mommatch_ql}Local nonlinearity: percentage of matching occupation moments versus relative accuracy for distinct  values of $\epsilon$: $0$ black, $10^{-6}$ dark gray, $10^{-3}$ gray, $1$ light gray.}
\end{center}\end{figure}

\end{document}